\newcounter{satznum}
\newtheorem{theorem}{Theorem}[satznum]
\newtheorem{lemma}[theorem]{Lemma}
\newtheorem{corollary}[theorem]{Corollary}
\newenvironment{remark}
 {\begin{trivlist}\item[]{\bf Remark.}}
 {\end{trivlist}}
\newenvironment{remarks}
 {\begin{trivlist}\item[]{\bf Remarks.}}
 {\end{trivlist}}
\newenvironment{proof}
 {\begin{trivlist}\item[]{\bf Proof.}}
 {\end{trivlist}}
\gdef\me{{\mathbb E}} 
\gdef\nz{{\mathbb N}} 
\gdef\pr{{\mathbb P}} 
\gdef\rz{{\mathbb R}} 
\newcounter{todocounter}
\def\@MRExtract#1 #2!{#1}
\newcommand{\MR}[1]{
  \xdef\@MRSTRIP{\@MRExtract#1 !}
  \href{http://www.ams.org/mathscinet-getitem?mr=\@MRSTRIP}{MR\@MRSTRIP}}
\begin{document}
   \section*{THE MITTAG--LEFFLER PROCESS AND A SCALING LIMIT FOR THE BLOCK COUNTING PROCESS OF THE
   BOLTHAUSEN--SZNITMAN COALESCENT}
   {\sc M. M\"ohle}\footnote{Mathematisches Institut, Eberhard Karls Universit\"at T\"ubingen,
   Auf der Morgenstelle 10, 72076 T\"ubingen, Germany, E-mail address: martin.moehle@uni-tuebingen.de}
\begin{center}
   October 23, 2014
\end{center}
\begin{abstract}
   The Mittag--Leffler process $X=(X_t)_{t\ge 0}$ is introduced.
   This Markov process has the property that its marginal random
   variables $X_t$ are Mittag--Leffler distributed with parameter
   $e^{-t}$, $t\in [0,\infty)$, and the semigroup $(T_t)_{t\ge 0}$ of $X$
   satisfies $T_tf(x)=\me(f(x^{e^{-t}}X_t))$ for all $x\ge 0$ and all
   bounded measurable functions $f:[0,\infty)\to\rz$.
   Further characteristics
   of the process $X$ are derived, for example an explicit formula
   for the
   joint moments of its finite dimensional distributions. The main result
   states that the block counting process of the Bolthausen--Sznitman
   $n$-coalescent, properly scaled, 
   converges in the Skorohod topology to the Mittag--Leffler process
   $X$ as the sample size $n$ tends to infinity.

   \vspace{2mm}

   \noindent Keywords: block counting process; Bolthausen--Sznitman
   coalescent; marginal distributions; Mittag--Leffler process; weak
   convergence

   \vspace{2mm}

   \noindent 2010 Mathematics Subject Classification:
            Primary 60F05;   
            60J27            
            Secondary 92D15; 
            97K60            
\end{abstract}
\subsection{Introduction and main results} \label{intro}
\setcounter{theorem}{0}
Exchangeable coalescent processes with multiple collisions are Markov
processes with state space ${\cal P}$, the set of partitions of
$\nz:=\{1,2,\ldots\}$. During each transition blocks merge together
to form a single block. These processes are characterized by a measure
$\Lambda$ on the unit interval $[0,1]$. For more information on these
processes we refer the reader to \cite{pitman} and \cite{sagitov}.
The Bolthausen--Sznitman coalescent \cite{bolthausensznitman} is the
particular $\Lambda$-coalescent $\Pi=(\Pi_t)_{t\ge 0}$ with
$\Lambda$ being the uniform distribution on $[0,1]$. In this
article we focus on the process $\Pi^{(n)}=(\Pi_t^{(n)})_{t\ge 0}$ of the
Bolthausen--Sznitman coalescent $\Pi$ restricted to a sample of size
$n\in\nz$. We are in particular interested in the process
$N^{(n)}:=(N_t^{(n)})_{t\ge 0}$, where $N_t^{(n)}$ denotes the number
of blocks of $\Pi_t^{(n)}$. The process $N^{(n)}$ is called the
block counting process of the Bolthausen--Sznitman $n$-coalescent.
It is well known that $N^{(n)}$ is a time-homogeneous Markov chain
with generator $Q=(q_{ij})_{1\le i,j\le n}$ having entries
$q_{ij}:=i/((i-j)(i-j+1))$ if $i>j$, $q_{ij}:=1-i$ if $i=j$ and $q_{ij}=0$
if $i<j$. For $n\in\nz$ and $t\in [0,\infty)$ define
\begin{equation} \label{x}
   X_t^{(n)}\ :=\ \frac{N_t^{(n)}}{n^{e^{-t}}}.
\end{equation}
We call $X^{(n)}:=(X_t^{(n)})_{t\ge 0}$ the scaled block counting process
of the Bolthausen--Sznitman $n$-coalescent. The scaling
$n^{e^{-t}}$ in (\ref{x}) is somewhat unusual since it involves not only
the parameter $n$ but also
the time parameter $t$. Clearly, $X^{(n)}$ is a Markov process with
state space $E:=[0,\infty)$, however, since the scaling depends
on $t$, $X^{(n)}$ is time-inhomogeneous. Our main result (Theorem
\ref{main} below) provides a distributional limiting result for $X^{(n)}$ as
the sample size $n$ tends to infinity. The arising limiting Markov process
$X=(X_t)_{t\ge 0}$ we call the Mittag--Leffler process, since the marginal
random variable $X_t$ turns out to be Mittag--Leffler distributed with
parameter $e^{-t}$. Note that the distribution of $X_t$ is uniquely
determined by its entire moments $\me(X_t^m)=m!/\Gamma(1+me^{-t})$,
$m\in\nz_0:=\{0,1,2,\ldots\}$.
For detailed information on the Mittag--Leffler distribution and on the
process $X$ we refer the reader to Section \ref{mittag}, where the existence
of $X$ is established and fundamental properties of this process are derived.
In order to wipe out possible confusion with processes in the literature
having similar names we mention that the process $X$ has nothing in common with
the autoregressive Mittag--Leffler process studied for example by Jayakumar
\cite{jayakumar} and Jayakumar and Pillai \cite{jayakumarpillai}.
These processes are based on the (heavy-tailed) Mittag--Leffler distribution
of the first type (see, for example, \cite{mainardigorenflo} and \cite{pillai}
for some related works), whereas the Mittag--Leffler distributed random
variable $X_t$ is of the second type and has finite moments of all orders.
Let us now present our main convergence result.
\begin{theorem} \label{main}
   For the Bolthausen--Sznitman coalescent the scaled block counting process
   $X^{(n)}=(X_t^{(n)})_{t\ge 0}$ defined via (\ref{x}) converges in
   $D_E[0,\infty)$ as $n\to\infty$ to the Mittag--Leffler process
   $X=(X_t)_{t\ge 0}$ introduced in Section \ref{mittag}.
\end{theorem}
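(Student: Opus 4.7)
The plan is to prove the convergence in $D_E[0,\infty)$ in the standard way, by verifying convergence of finite-dimensional distributions together with tightness. Since the scaling $n^{e^{-t}}$ in (\ref{x}) depends on $t$, the process $X^{(n)}$ is time-inhomogeneous and no direct generator-convergence argument is immediately available; a direct fdd-plus-tightness approach is preferable, and it takes advantage of the sampling consistency of the Bolthausen--Sznitman coalescent.

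\textbf{Finite-dimensional distributions.} First I would establish the one-dimensional convergence $X^{(n)}_t \Rightarrow X_t$ for each fixed $t \geq 0$ by the method of moments. Since the Mittag--Leffler law is moment-determinate with $\me X_t^m = m!/\Gamma(1+me^{-t})$, it is enough to show $\me[(N_t^{(n)})^{\underline{m}}]/n^{m e^{-t}} \to m!/\Gamma(1+me^{-t})$. Writing $u_m^{(n)}(t) := \me[(N_t^{(n)})^{\underline{m}}]$ and applying the generator $Q$ gives a triangular linear ODE system for $u_1^{(n)}, u_2^{(n)}, \ldots$, from which the required asymptotic can be read off by induction on $m$. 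To pass from one-dimensional to joint distributions I would use the sampling consistency of the Bolthausen--Sznitman coalescent: conditional on $N_s^{(n)}=k$, the process $(N_{s+u}^{(n)})_{u\ge 0}$ has the law of $(N_u^{(k)})_{u\ge 0}$. Hence
\[
   \me\big[(X_{s+t}^{(n)})^m \bigm| N_s^{(n)}\big] \;\longrightarrow\; (X_s)^{m e^{-t}}\cdot \frac{m!}{\Gamma(1+m e^{-t})} \;=\; (T_t g)(X_s), \qquad g(x):=x^m,
\]
by the already-proved one-dimensional result applied with $k = N_s^{(n)} \to \infty$. Iterating this identity across time points $0 \le t_1 < \cdots < t_d$ yields convergence of all joint polynomial moments of $(X_{t_1}^{(n)},\ldots,X_{t_d}^{(n)})$ to those of $(X_{t_1},\ldots,X_{t_d})$, and hence fdd convergence by moment determinacy.

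\textbf{Tightness.} To upgrade fdd convergence to convergence in the Skorokhod space I would apply Aldous' criterion. The containment condition $\sup_n \pr(\sup_{t\le T} X_t^{(n)} > K) \to 0$ as $K \to \infty$ follows from a Doob maximal inequality together with the uniform moment bounds obtained in the previous step. For the modulus-of-continuity condition one writes $X^{(n)}$ as a semimartingale: between jumps, differentiating the denominator in (\ref{x}) produces a smooth drift of size $X^{(n)}_t e^{-t}\log n$, while the jumps $\Delta X^{(n)}_t = \Delta N^{(n)}_t/n^{e^{-t}}$ are compensated at a rate given by $Q$. These two a priori large contributions almost cancel (which is precisely why $X^{(n)}_t$ remains of order one in expectation), and the predictable quadratic variation of the resulting martingale can be bounded over any interval $[\tau_n,\tau_n+\delta_n]$ uniformly in a stopping time $\tau_n \le T$, yielding Aldous' condition as $\delta_n \downarrow 0$.

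\textbf{Main obstacle.} I expect the tightness step to be the principal difficulty. The time-dependent scaling $n^{e^{-t}}$ causes the deterministic drift part of $X^{(n)}$ to diverge with $n$, and only a delicate cancellation against the compensator of the jumps keeps the process of order one; controlling the residual fluctuation uniformly in $n$ requires a precise estimate on the large mergers of the Bolthausen--Sznitman coalescent, in which many blocks can coalesce at once. Closely related is the technical issue of obtaining uniform-in-$n$ moment bounds for $X_t^{(n)}$, which is needed both to justify the interchange of limit and conditional expectation in the fdd argument and to feed the tightness estimates.
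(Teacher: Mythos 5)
Your two-stage plan (finite-dimensional distributions plus a Skorohod-space argument) is sound, and your treatment of the finite-dimensional distributions is essentially the paper's: the paper also uses the method of moments, with the one-dimensional moments coming from the exact identity $\me([N_t^{(n)}]_m)=\frac{\Gamma(m+1)}{\Gamma(1+me^{-t})}[n]_{me^{-t}}$ (Lemma \ref{mean}, derived from the spectral decomposition of $Q$ rather than from your triangular ODE system, though the latter would also work), and the passage to joint moments uses exactly the consistency/time-homogeneity property you invoke, namely that given $N_{t_{k-1}}^{(n)}=j$ the future block counting process is that of a $j$-coalescent; this is packaged as the exact joint-moment formula of Lemma \ref{meangeneral}, which matches the limit moments of Lemma \ref{xmoments}. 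Working with exact identities rather than asymptotics lets the paper sidestep the uniform-integrability issue you correctly flag for the interchange of limit and conditional expectation.

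Where you genuinely diverge is the second stage, and this is also where your proposal stops short of a proof. The paper does not prove tightness at all: it invokes a time-inhomogeneous variant of \cite[p.~167, Theorem~2.5]{ethierkurtz}, so that convergence in $D_E[0,\infty)$ follows once one shows $\sup_{x\in E_n(s)}|T_{s,t}^{(n)}f(x)-T_tf(x)|\to 0$ for all $f\in\widehat{C}(E)$, where $(T_t)_{t\ge 0}$ is the Feller semigroup (\ref{semigroup}) of the limit. This is established by first proving uniform convergence on compacta for polynomials (again via Lemma \ref{mean}) and then combining Weierstrass approximation with Markov-inequality tail bounds to upgrade to all of $\widehat{C}(E)$. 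That route buys a complete proof with no semimartingale analysis, at the price of needing the limit semigroup to be Feller (verified in Lemma \ref{applemma}). Your Aldous-criterion route is plausible in principle — the cancellation you describe between the drift $X_t^{(n)}e^{-t}\log n$ from differentiating $n^{-e^{-t}}$ and the jump compensator (whose leading term from state $i$ is of size $i\log i$) is real, and is consistent with the finite limit $a_1(x)=x\Psi(2)-x\log x$ in (\ref{ak}) — but as written it is a program, not an argument: the uniform-in-$n$ control of the residual drift and of the predictable quadratic variation over stopping-time windows, including the contribution of the $O(1)$-per-unit-time macroscopic mergers of the Bolthausen--Sznitman coalescent, is precisely the hard part and is left unexecuted. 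If you want to complete the proof along your lines you must carry out that estimate; otherwise the cleaner path is the semigroup-convergence theorem, which makes tightness automatic.
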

\begin{remarks}
   1. Theorem \ref{main} can be also stated logarithmically as follows.
   The process $(\log N_t^{(n)}-e^{-t}\log n)_{t\ge 0}$ converges in
   $D_\rz[0,\infty)$ to the process $(\log X_t)_{t\ge 0}$ as $n\to\infty$.
   Neither $X$ nor $(\log X_t)_{t\ge 0}$ is a L\'evy process. Note that
   the logarithmic block counting process $(\log N_t^{(n)})_{t\ge 0}$
   plays an important role in the problem of whether a coalescent process
   comes down from infinity, see, for example, Section 4 of \cite{limic}.

   2. Note that $X^{(n)}$ is time-inhomogeneous whereas the limiting process
   $X$ is time-homogeneous. Thus, Theorem \ref{main} in particular states
   that $X^{(n)}$ is asymptotically time-homogeneous.
\end{remarks}
The article is organized as follows.
Section \ref{mittag} is devoted to the Mittag--Leffler process $X$.
We prove the existence of this process and derive fundamental properties
of $X$, among them representations for the semigroup of $X$ (see (\ref{semigroup}))
and an explicit formula for the joint moments (see Lemma \ref{xmoments}) of the
finite-dimensional distributions of $X$. In Section \ref{moment} we
provide some fundamental formulas (see Lemma \ref{mean} and Lemma
\ref{meangeneral}) for certain moments of the
block counting process $N^{(n)}$ of the Bolthausen--Sznitman $n$-coalescent.
These results rely on the
spectral decomposition \cite{moehlepitters} of the
generator of the block counting process. Lemma \ref{mean} in particular
shows that $N_t^{(n)}$ has mean
\begin{equation} \label{firstmoment}
   \me(N_t^{(n)})\ =\ \frac{\Gamma(n+e^{-t})}{\Gamma(n)\Gamma(1+e^{-t})}
   \qquad n\in\nz, t\in [0,\infty).
\end{equation}
For large $n$ the mean (\ref{firstmoment}) is asymptotically equal to
$n^{e^{-t}}(\Gamma(1+e^{-t}))^{-1}=n^{e^{-t}}\me(X_t)$, which indicates
that $n^{e^{-t}}$ is the appropriate scaling in order to obtain a
non-degenerate limiting process for the scaled block counting process
as $n$ tends to infinity. In the final
Section \ref{proof} this argument is made rigorous leading to a proof
of Theorem \ref{main}. First the convergence of the finite-dimensional
distributions is verified and afterwards the convergence in $D_E[0,\infty)$
is established.

We leave it open for future work to establish convergence results
in analogy to Theorem \ref{main} for the block counting process $N^{(n)}$
of more general coalescent processes (that do not come down from infinity),
for example for the $\beta(a,b)$-coalescent with $a\ge 1$ (and $b>0$).
\subsection{The Mittag--Leffler process} \label{mittag}
\setcounter{theorem}{0}
Before we come to the Mittag--Leffler process let us briefly mention
some well known results on the Mittag--Leffler distribution.
Let $\eta=\eta(\alpha)$ be a random variable being Mittag--Leffler
distributed with parameter $\alpha\in [0,1]$. Note that $\eta$ has
moments 
$$
\me(\eta^m)\ =\ \frac{\Gamma(1+m)}{\Gamma(1+m\alpha)},\qquad m\in [0,\infty),
$$
and that the entire moments $\me(\eta^m)$, $m\in\nz_0$, uniquely
determine the distribution of $\eta$. Clearly, $\eta$ is standard
exponentially distributed for $\alpha=0$ and $\pr(\eta=1)=1$ for
$\alpha=1$.

If $\alpha_n\to\alpha$, then the moments of
$\eta(\alpha_n)$ converge to those of $\eta(\alpha)$, which
implies the convergence $\eta(\alpha_n)\to\eta(\alpha)$ in distribution
as $n\to\infty$. Thus, the map $\alpha\mapsto\pr_{\eta(\alpha)}$ is a
continuous function from $[0,1]$ to the space ${\cal P}(E)$ of
probability measures on $E:=[0,\infty)$ equipped with the topology
of convergence in distribution.

For $\alpha\in (0,1)$ the Mittag--Leffler distribution can be characterized
in terms of an exponential integral of a particular subordinator as follows.
Let $S=(S_t)_{t\ge 0}$ be a drift-free subordinator with killing rate
$k:=1/\Gamma(1-\alpha)$ and L\'evy measure $\varrho$ having density
\begin{equation} \label{rho}
   u\ \mapsto\ \frac{1}{\Gamma(1-\alpha)}
   \frac{e^{-u/\alpha}}{(1-e^{-u/\alpha})^{\alpha+1}},
   \qquad u\in (0,\infty),
\end{equation}
with respect to Lebesgue measure on $(0,\infty)$. It is readily
checked (see Lemma \ref{laplace} in the appendix) that $S$ has
Laplace exponent
\begin{equation} \label{phi}
   \Phi(x)\ =\ \frac{\Gamma(1+\alpha x)}{\Gamma(1-\alpha+\alpha x)},
   \qquad x\in [0,\infty).
\end{equation}
The distribution of the exponential integral $I:=\int_0^\infty
e^{-S_t}{\rm d}t$ is uniquely determined (see \cite{carmonapetityor})
via its entire moments
$$
\me(I^m)\ =\ \frac{m!}{\Phi(1)\cdots\Phi(m)}
\ =\ m!\prod_{j=1}^m \frac{\Gamma(1+(j-1)\alpha)}{\Gamma(1+j\alpha)}
\ =\ \frac{\Gamma(1+m)}{\Gamma(1+m\alpha)},\qquad m\in\nz.
$$
Thus, $I$ is Mittag--Leffler distributed with parameter $\alpha$.

\subsubsection{Existence of the Mittag--Leffler process}
In this subsection we prove the existence of a particular Markov process
$X=(X_t)_{t\ge 0}$ having sample paths in $D_E[0,\infty)$ such that every
$X_t$ is Mittag--Leffler distributed with parameter $e^{-t}$. Constructing
Markov processes with given marginal distributions has attained some
interest in the literature, mainly in the context of (semi)martingales.
We exemplary refer the reader to \cite{madanyor} and the references
therein. Note however, that the process $X$ we are going to construct
will be neither a supermartingale nor a submartingale.

For $t\in [0,\infty)$ let $\eta_t$ be a random variable being Mittag--Leffler
distributed with parameter $e^{-t}$. 
Define $p:[0,\infty)\times E\times {\cal B}(E)$ via
\begin{equation} \label{tf}
   p(t,x,B)\ :=\ \me(1_B(x^{e^{-t}}\eta_t))
   \ =\ \pr(x^{e^{-t}}\eta_t\in B).
\end{equation}
The definition of $p$ is such that
for all $(t,x)\in [0,\infty)\times E$ the random variable $x^{e^{-t}}\eta_t$
has distribution $p(t,x,.)$. In particular, $p(t,x,.)$ has moments
\begin{equation} \label{tfmoments}
   \int_E y^m p(t,x,{\rm d}y)
   \ =\ \me((x^{e^{-t}}\eta_t)^m)
   \ =\ x^{me^{-t}}\frac{\Gamma(1+m)}{\Gamma(1+me^{-t})},\qquad m\in [0,\infty),
\end{equation}
and the entire moments $\int_E y^m p(t,x,{\rm d}y)$, $m\in\nz_0$, uniquely
determine the distribution $p(t,x,.)$. In order to verify the
Chapman--Kolmogorov property
\begin{equation} \label{chap}
   p(s+t,x,B)\ =\ \int_E p(s,y,B)\,p(t,x,{\rm d}y),
   \qquad s,t\in [0,\infty), x\in E, B\in{\cal B}(E),
\end{equation}
fix $s,t\in [0,\infty)$ and $x\in E$. Define
$\mu_1(B):=p(s+t,x,B)$ and $\mu_2(B):=\int_E p(s,y,B)\,p(t,x,{\rm d}y)$
for all $B\in{\cal B}(E)$. Clearly, $\mu_1$ and $\mu_2$ are
probability measures on $E$. By (\ref{tfmoments}), $\mu_1$ has moments
$$
\int_E z^m\,\mu_1({\rm d}z)
\ =\ \int_E z^m p(s+t,x,{\rm d}z)
\ =\ x^{me^{-(s+t)}}\frac{\Gamma(1+m)}{\Gamma(1+me^{-(s+t)})},
\qquad m\in\nz_0,
$$
and these moments uniquely determine $\mu_1$. By Fubini's theorem
and (\ref{tfmoments}), $\mu_2$ has moments
\begin{eqnarray*}
   \int_E z^m\,\mu_2({\rm d}z)
   & = & \int_E z^m \int_E p(s,y,{\rm d}z)\,p(t,x,{\rm d}y)
   \ = \ \int_E \bigg(\int_E z^m p(s,y,{\rm d}z)\bigg)\,p(t,x,{\rm d}y)\\
   & = & \int_E y^{me^{-s}}\frac{\Gamma(1+m)}{\Gamma(1+me^{-s})}\,p(t,x,{\rm d}y)
   \ = \ \frac{\Gamma(1+m)}{\Gamma(1+me^{-s})}
         \int_E y^{me^{-s}} p(t,x,{\rm d}y)\\
   & = & \frac{\Gamma(1+m)}{\Gamma(1+me^{-s})}
         x^{me^{-s}e^{-t}}\frac{\Gamma(1+me^{-s})}{\Gamma(1+me^{-s}e^{-t})}
   \ = \ x^{me^{-(s+t)}}\frac{\Gamma(1+m)}{\Gamma(1+me^{-(s+t)})},
\end{eqnarray*}
and these moments uniquely determine $\mu_2$. Since the moments of
$\mu_1$ and $\mu_2$ coincide, it follows that $\mu_1=\mu_2$ and the
Chapman--Kolmogorov property (\ref{chap}) is established. Thus,
the family $(T_t)_{t\ge 0}$ of linear operators $T_t$, defined via
\begin{equation} \label{semigroup}
   T_tf(x)\ :=\ \int_E f(y)\,p(t,x,{\rm d}y)
   \ =\ \me(f(x^{e^{-t}}\eta_t)),\qquad t\in [0,\infty), f\in B(E), x\in E,
\end{equation}
defines a semigroup on $B(E)$, the set of bounded measurable functions
$f:E\to\rz$ equipped with the supremum norm $\|f\|:=\sup_{x\in E}|f(x)|$.
Note that (\ref{semigroup}) is also well defined for some unbounded
functions, for example for all polynomials $f:E\to\rz$. The
semigroup $(T_t)_{t\ge 0}$ on $B(E)$ is clearly conservative,
since $T_t1=1$ for all $t\in [0,\infty)$. We have
$\|T_tf\|=\sup_{x\in E}|\me(f(x^{e^{-t}}\eta_t))|\le\sup_{x\in E}
\me(|f(x^{e^{-t}}\eta_t)|)\le \|f\|$ for all $t\in [0,\infty)$ and all $f\in B(E)$.
Thus, $\|T_t\|\le 1$ for all $t\in [0,\infty)$, so the semigroup
$(T_t)_{t\ge 0}$ is contracting. Moreover, $(T_t)_{t\ge 0}$ is obviously
positive meaning that each operator $T_t$ maps nonnegative functions
(in $B(E)$) to nonnegative functions.

Let $\widehat{C}(E)\subseteq B(E)$ denote the Banach space of
continuous functions $f:E\to\rz$ vanishing at infinity. Using the
dominated convergence theorem it is easily seen (see Lemma
\ref{applemma} in the appendix) that $T_t\widehat{C}(E)\subseteq
\widehat{C}(E)$ for all $t\in [0,\infty)$. With some more effort
(see again Lemma \ref{applemma}) it can be shown by exploiting the
theorem of Heine that, for all $f\in\widehat{C}(E)$, $T_tf(x)\to
f(x)$ as $t\to 0$ uniformly for all $x\in E$. Therefore,
$(T_t)_{t\ge 0}$ is strongly continuous on
$\widehat{C}(E)$, thus a Feller
semigroup on $\widehat{C}(E)$. Hence (see, for example,
\cite[p.~169, Theorem 2.7]{ethierkurtz}) there exists a Markov
process $X=(X_t)_{t\ge 0}$ corresponding to $(T_t)_{t\ge 0}$ with
initial distribution $\pr(X_0=1)=1$ and sample paths in the space
$D_E[0,\infty)$ of right continuous functions $x:[0,\infty)\to E$
with left limits equipped with the Skorohod topology.
Note that $\me(f(X_{s+t})\,|\,X_u,u\le s)=T_tf(X_s)$ for all $f\in B(E)$ and
all $s,t\in [0,\infty)$ and that
$$
\pr(X_{s+t}\in B\,|\,X_u,u\le s)\ =\ p(t,X_s,B),
\qquad s,t\in [0,\infty), B\in{\cal B}(E).
$$
From
$\me(f(X_t))=\me(f(X_t)\,|\,X_0)=T_tf(1)=\me(f(\eta_t))$, $f\in B(E)$,
$t\in [0,\infty)$, we conclude
that $X_t$ has the same distribution as $\eta_t$, so $X_t$ is
Mittag--Leffler distributed with parameter $e^{-t}$. We therefore
call $X$ the {\em Mittag--Leffler process}.

Clearly, $X_t\to X_\infty$ in distribution as $t\to\infty$, where
$X_\infty$ is standard exponentially distributed. Thus, the stationary
distribution of $X$ is the standard exponential distribution.
\begin{remark}
   The Chapman--Kolmogoroff property holds whenever the random
   variable $\eta_t$ introduced at the beginning of the construction in
   this subsection has
   moments of the form $\me(\eta_t^m)=h(m)/h(me^{-t})$ for some given function
   $h:[0,\infty)\to (0,\infty)$. We have carried out the
   construction for $h(x):=\Gamma(1+x)$
   leading to the Mittag--Leffler process. More generally, one may
   use other functions $h$, for example $h(x):=\Gamma(\beta+x)$
   for some constant $\beta$, leading to a construction of a
   wider class of Markov processes $X=(X_t)_{t\ge 0}$.
\end{remark}
\subsubsection{Further properties of the Mittag--Leffler process}
In this subsection we derive some further properties of the Mittag--Leffler process $X$.
The following lemma provides a formula for the moments of the finite-dimensional
distributions of $X$.
\begin{lemma}[Moments of the finite-dimensional distributions of ${\mathbf X}$]
\label{xmoments}
   Let $k\in\nz$, $0=t_0\le t_1<t_2<\cdots<t_k$ and $m_1,\ldots,m_k\in [0,\infty)$.
   For $j\in\{0,\ldots,k\}$ define $x_j:=x_j(k):=\sum_{i=j+1}^k m_i
   e^{-(t_i-t_j)}$. Note that $x_k=0$ and $x_0=\sum_{i=1}^k m_ie^{-t_i}$.
   Then
   \begin{equation} \label{jointmoments}
      \me(X_{t_1}^{m_1}\cdots X_{t_k}^{m_k})
      \ =\ \prod_{j=1}^k \frac{\Gamma(1+x_j+m_j)}{\Gamma(1+x_{j-1})}
   \end{equation}
   and the entire moments $\me(X_{t_1}^{m_1}\cdots X_{t_k}^{m_k})$,
   $m_1,\ldots,m_k\in\nz_0$, uniquely determine the distribution of
   $(X_{t_1},\ldots,X_{t_k})$. In particular,
   $\me(X_t^m)=\Gamma(1+m)/\Gamma(1+me^{-t})$, $m\in\nz_0$, $t\in
   [0,\infty)$, and, hence, $\me(X_t)=1/\Gamma(1+e^{-t})$ and
   ${\rm
   Var}(X_t)=\me(X_t^2)-(\me(X_t))^2=2/\Gamma(1+2e^{-t})-1/(\Gamma(1+e^{-t}))^2$,
   $t\in [0,\infty)$.
\end{lemma}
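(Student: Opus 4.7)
My approach would be to establish the formula by induction on $k$, using the Markov property of $X$ together with the single-time moment identity (\ref{tfmoments}). The base case $k = 1$ follows at once: since $\pr(X_0 = 1) = 1$, we have $\me(X_{t_1}^{m_1}) = T_{t_1}(y \mapsto y^{m_1})(1) = \Gamma(1+m_1)/\Gamma(1 + m_1 e^{-t_1})$, which matches the claimed formula since $x_1 = 0$ and $x_0 = m_1 e^{-t_1}$.

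For the inductive step I would condition on $(X_u, u \le t_{k-1})$. The Markov property together with (\ref{tfmoments}), applied to $f(y) = y^{m_k}$ at time $t_k - t_{k-1}$, gives
$$
\me(X_{t_k}^{m_k} \mid X_u, u \le t_{k-1})
\;=\; X_{t_{k-1}}^{m_k e^{-(t_k - t_{k-1})}}\,\frac{\Gamma(1 + m_k)}{\Gamma(1 + m_k e^{-(t_k - t_{k-1})})}.
$$
Inserting this into $\me(X_{t_1}^{m_1}\cdots X_{t_k}^{m_k})$ and using the tower property reduces the $k$-fold moment to a $(k-1)$-fold moment in which the exponent at $t_{k-1}$ has been changed from $m_{k-1}$ to $m_{k-1} + m_k e^{-(t_k - t_{k-1})}$, at the cost of the Gamma-ratio prefactor $\Gamma(1+m_k)/\Gamma(1 + m_k e^{-(t_k - t_{k-1})})$. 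The elementary algebraic identity
$$
x_{j-1} \;=\; e^{-(t_j - t_{j-1})}\bigl(m_j + x_j\bigr),\qquad j = 1,\ldots,k,
$$
which is immediate from the definition of $x_j$, has two consequences that make the bookkeeping work: first, the updated exponent at $t_{k-1}$ equals $m_{k-1} + x_{k-1}$, so the prefactor equals exactly $\Gamma(1 + x_k + m_k)/\Gamma(1 + x_{k-1})$, i.e. the $j = k$ factor of the product; and second, the $x_j$ values for $j \le k-2$ attached to the truncated problem (with times $t_1,\ldots,t_{k-1}$ and the modified exponent at $t_{k-1}$) agree with the original ones, so the induction hypothesis supplies the remaining $k-1$ factors unchanged.

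For the uniqueness statement, moment-determinacy of the marginals alone is not formally sufficient for a multivariate distribution, so I would argue separately that $Y := X_{t_1} + \cdots + X_{t_k}$ has a finite moment-generating function in a neighborhood of $0$. Each $X_{t_j}$ is Mittag--Leffler with parameter $\alpha_j := e^{-t_j} \in (0, 1]$, and the power series $\sum_{m \ge 0} c^m\me(X_{t_j}^m)/m! = \sum_{m \ge 0} c^m/\Gamma(1 + m\alpha_j)$ converges for every $c \ge 0$ by standard Gamma-function asymptotics. Hölder's inequality then yields $\me(e^{cY}) < \infty$ for every $c \ge 0$, which in turn implies that the joint law of $(X_{t_1},\ldots,X_{t_k})$ is determined by its entire moments.

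The main obstacle is essentially just the combinatorial bookkeeping: making sure the recursion $x_{j-1} = e^{-(t_j-t_{j-1})}(m_j + x_j)$ propagates so that the successive Gamma-ratio prefactors coming out of each conditioning step line up telescopingly as $\Gamma(1 + x_j + m_j)/\Gamma(1 + x_{j-1})$. The underlying tools (Markov property, semigroup formula (\ref{tfmoments}), and moment-determinacy via exponential moments) are all standard.
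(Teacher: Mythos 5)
Your proof is correct and follows essentially the same route as the paper: induction on $k$, conditioning on the past up to $t_{k-1}$, applying the semigroup moment formula to fold $m_k$ into a modified exponent $m_{k-1}+m_k e^{-(t_k-t_{k-1})}$ at $t_{k-1}$, and checking that the $x_j$'s of the truncated problem match the original ones. Your additional argument for the multivariate moment-determinacy via finiteness of $\me(e^{cY})$ is a welcome supplement, as the paper asserts that claim without an explicit justification.
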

\begin{proof}
   Induction on $k$. Clearly, (\ref{jointmoments}) holds for $k=1$, since
   $X_{t_1}$ is Mittag--Leffler distributed with parameter $e^{-t_1}$.
   The induction step from $k-1$ to $k$ works as follows. We have
   $$
   \me(X_{t_1}^{m_1}\cdots X_{t_k}^{m_k})
   \ =\ \me(\me(X_{t_1}^{m_1}\cdots X_{t_k}^{m_k}\,|\,X_{t_1},\ldots,X_{t_{k-1}}))
   \ =\ \me(X_{t_1}^{m_1}\cdots X_{t_{k-1}}^{m_{k-1}}\me(X_{t_k}^{m_k}\,|\,X_{t_{k-1}})).
   $$
   Define $f_m(x):=x^m$ for convenience. Using the formula (\ref{semigroup})
   for the semigroup operator $T_t$, the last conditional expectation is
   given by
   $$
   \me(X_{t_k}^{m_k}\,|\,X_{t_{k-1}})
   \ =\ (T_{t_k-t_{k-1}}f_{m_k})(X_{t_{k-1}})
   \ =\ \frac{\Gamma(1+m_k)}{\Gamma(1+m_ke^{-(t_k-t_{k-1})})}
   X_{t_{k-1}}^{m_ke^{-(t_k-t_{k-1})}}.
   $$
   We therefore obtain
   \begin{eqnarray*}
      \me(X_{t_1}^{m_1}\cdots X_{t_k}^{m_k})
      & = & \frac{\Gamma(1+m_k)}{\Gamma(1+m_ke^{-(t_k-t_{k-1})})}
            \me(X_{t_1}^{m_1}\cdots X_{t_{k-2}}^{m_{k-2}}X_{t_{k-1}}^{m_{k-1}+m_ke^{-(t_k-t_{k-1})}})\\
      & = & \frac{\Gamma(1+x_k+m_k)}{\Gamma(1+x_{k-1})}
            \me(X_{t_1}^{\tilde{m}_1}\cdots X_{t_{k-1}}^{\tilde{m}_{k-1}}),
   \end{eqnarray*}
   where $\tilde{m}_j:=m_j$ for $1\le j\le k-2$ and
   $\tilde{m}_{k-1}:=m_{k-1}+m_ke^{-(t_k-t_{k-1})}$. By induction,
   $$
   \me(X_{t_1}^{\tilde{m}_1}\cdots X_{t_{k-1}}^{\tilde{m}_{k-1}})
   \ =\ \prod_{j=1}^{k-1}\frac{\Gamma(1+y_j+\tilde{m}_j)}{\Gamma(1+y_{j-1})},
   $$
   where $y_j:=\sum_{i=j+1}^{k-1} \tilde{m}_i e^{-(t_i-t_j)}$ for all
   $j\in\{0,\ldots,k-1\}$. The result follows since, for $1\le j\le k-2$,
   $$
   y_j
   \ =\ \sum_{i=j+1}^{k-2} m_ie^{-(t_i-t_j)} + (m_{k-1}+m_ke^{-(t_k-t_{k-1})})e^{-(t_{k-1}-t_j)}
   \ =\ \sum_{i=j+1}^k m_i e^{-(t_i-t_j)}
   \ =\ x_j
   $$
   and $y_{k-1}=0$ and, hence,
   $y_{k-1}+\tilde{m}_{k-1}=\tilde{m}_{k-1}=
   m_ke^{-(t_k-t_{k-1})}+m_{k-1}=x_{k-1}+m_{k-1}$.\hfill$\Box$
\end{proof}
\begin{remark}
   The mean $\me(X_t)=1/\Gamma(1+e^{-t})$ is increasing
   for $t<t_0$ and decreasing for $t>t_0$, where
   $t_0\approx 0.772987$ is the unique solution of the equation
   $\Psi(1+e^{-t_0})=0$ and $\Psi:=\Gamma'/\Gamma$ denotes the logarithmic
   derivative of the gamma function. The process $X$ is therefore neither
   a process with non-increasing paths nor a process with non-decreasing
   paths. In particular, we are not in the context of \cite{haasmiermont},
   where essentially all considered processes have non-increasing paths.
\end{remark}
\begin{corollary}
   The Mittag--Leffler process $X=(X_t)_{t\ge 0}$ is continuous in
   probability, i.e. $X_s\to X_t$ in probability as $s\to t$ for
   every $t\in [0,\infty)$.
\end{corollary}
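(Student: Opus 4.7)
The plan is to prove convergence in probability via $L^{2}$-convergence, using Lemma \ref{xmoments} to write down $\me((X_{s}-X_{t})^{2})$ in closed form and then let the Gamma function do the work. Without loss of generality I fix $t\in[0,\infty)$ and consider $s\ge 0$ with $|s-t|$ small; by symmetry of $\me(X_{s}X_{t})$ in $(s,t)$ it suffices to treat the case $s\le t$.

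First I would plug the parameters $k=2$, $t_{1}=s$, $t_{2}=t$, $m_{1}=m_{2}=1$ into \eqref{jointmoments}. This yields $x_{2}=0$, $x_{1}=e^{-(t-s)}$, $x_{0}=e^{-s}+e^{-t}$, so that
\[
   \me(X_{s}X_{t})
   \ =\ \frac{\Gamma(2+e^{-(t-s)})}{\Gamma(1+e^{-(t-s)})\,\Gamma(1+e^{-s}+e^{-t})}.
\]
The second moments are already recorded in Lemma \ref{xmoments}: $\me(X_{s}^{2})=2/\Gamma(1+2e^{-s})$ and $\me(X_{t}^{2})=2/\Gamma(1+2e^{-t})$. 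Combining these three expressions gives an explicit formula for $\me((X_{s}-X_{t})^{2})$ in terms of the Gamma function evaluated at quantities depending continuously on $s$ and $t$.

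Next I would let $s\to t$. Since $e^{-(t-s)}\to 1$, continuity of $\Gamma$ yields $\Gamma(2+e^{-(t-s)})\to\Gamma(3)=2$, $\Gamma(1+e^{-(t-s)})\to\Gamma(2)=1$, and $\Gamma(1+e^{-s}+e^{-t})\to\Gamma(1+2e^{-t})$. Hence $\me(X_{s}X_{t})\to 2/\Gamma(1+2e^{-t})=\me(X_{t}^{2})$, and likewise $\me(X_{s}^{2})\to\me(X_{t}^{2})$. Therefore $\me((X_{s}-X_{t})^{2})\to 0$, i.e.\ $X_{s}\to X_{t}$ in $L^{2}$, which by Chebyshev's inequality implies $X_{s}\to X_{t}$ in probability.

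There is really no obstacle of substance here: all the nontrivial work was done in establishing Lemma \ref{xmoments}. The only thing one should be slightly careful about is the two-sided limit $s\to t$ (rather than just $s\to t^{+}$, which would already follow from the right-continuity of sample paths in $D_{E}[0,\infty)$); the $L^{2}$ route handles both sides uniformly because the explicit formula above is symmetric under $s\leftrightarrow t$.
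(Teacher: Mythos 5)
Your proposal is correct and follows essentially the same route as the paper: both compute $\me(X_sX_t)$ and the second moments from Lemma \ref{xmoments}, conclude $\me((X_s-X_t)^2)\to 0$ by continuity of the Gamma function, and finish with Chebyshev's inequality. The only cosmetic difference is that the paper writes the mixed moment with $e^{-|t-s|}$ to cover both orderings at once, where you invoke symmetry in $(s,t)$ instead.
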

\begin{proof}
   By Lemma \ref{xmoments}, for all $s,t\in [0,\infty)$,
   $$
   \me(X_s^2)\ =\ \frac{\Gamma(3)}{\Gamma(1+2e^{-s})}
   \ \to\ \frac{\Gamma(3)}{\Gamma(1+2e^{-t})}
   \ =\ \me(X_t^2),\qquad s\to t,
   $$
   and
   $$
   \me(X_sX_t)
   \ =\ \frac{\Gamma(2+e^{-|t-s|})}{\Gamma(1+e^{-s}+e^{-t})}
   \frac{\Gamma(2)}{\Gamma(1+e^{-|t-s|})}
   \ \to\ \frac{\Gamma(3)}{\Gamma(1+2e^{-t})}
   \ =\ \me(X_t^2),\qquad s\to t.
   $$
   It follows that $\me((X_s-X_t)^2)=
   \me(X_s^2)-2\me(X_sX_t)+\me(X_t^2)\to 0$ as $s\to t$.
   Thus, for all $\varepsilon>0$,
   $\pr(|X_s-X_t|\ge\varepsilon)\le\me((X_s-X_t)^2)/\varepsilon^2\to 0$
   as $s\to t$.\hfill$\Box$
\end{proof}
\begin{remark}
   Note that (the Mittag--Leffler distributed random variable) $X_t$ is
   not infinitely divisible.
   Moreover, $X$ does not have independent increments. In particular,
   $X$ is not a L\'evy process. The process $(\log X_t)_{t\ge 0}$ is
   as well not a L\'evy process, since this process does not have
   independent increments either. This can be also seen as follows. The
   Fourier transform
   $\phi_t(x):=\me(e^{ix\log X_t})
   =\me(X_t^{ix})
   =\Gamma(1+ix)/\Gamma(1+ixe^{-t})$, $x\in\rz$,
   of $\log X_t$ is not the $t$-th power of $\phi_1(x)$.

   We leave a possible construction of the
   Mittag--Leffler process via L\'evy processes or subordinators, for example
   as a random time change and/or by taking the absolute value of a certain
   L\'evy process, for future work. For related functionals of this type
   (local time processes, Bessel-type processes) we refer the reader to
   James \cite{james} and the references therein.
\end{remark}
We finally provide in this subsection some information on the generator $A$ of
the Mittag--Leffler process $X$, but we will not use the
generator $A$ in our further considerations. Suppose that $f\in B(E)$
is infinitely often differentiable and that $f$ satisfies
$f(y)=\sum_{k=0}^\infty (f^{(k)}(x)/k!)(y-x)^k$ for all
$x,y\in E$. Then
$$
\frac{T_tf(x)-f(x)}{t}
\ =\ \sum_{k=1}^\infty \frac{f^{(k)}(x)}{k!}\frac{\me((x^{e^{-t}}\eta_t-x)^k)}{t}.
$$
Let $\Psi:=\Gamma'/\Gamma$ denote the logarithmic derivative of the gamma
function. Since
\begin{equation} \label{ak}
a_k(x)\ :=\ \lim_{t\searrow 0}\frac{\me((x^{e^{-t}}\eta_t-x)^k)}{t}
\ =\
\left\{
   \begin{array}{ll}
      x\Psi(2)-x\log x 
         & \mbox{for $k=1$,}\\
      \displaystyle\frac{(-x)^k}{k-1} & \mbox{for $k\in\nz\setminus\{1\}$,}
   \end{array}
\right.
\end{equation}
the generator $A$ of $X$ satisfies
$$
Af(x)\ =\ \sum_{k=1}^\infty \frac{f^{(k)}(x)}{k!} a_k(x)
$$
with $a_k(x)$ defined via (\ref{ak}).
\subsection{Moment calculations} \label{moment}
\setcounter{theorem}{0}
In this section we provide formulas for certain
moments of the block counting process $N^{(n)}=(N_t^{(n)})_{t\ge 0}$
of the Bolthausen--Sznitman $n$-coalescent. In the following we use
for $x\in (0,\infty)$ and $m\in [0,\infty)$ the notation
$[x]_m:=\Gamma(x+m)/\Gamma(x)$. Note that for $m\in\nz_0$ the
symbol $[x]_m=x(x+1)\cdots(x+m-1)$ coincides with the ascending factorial.
The following lemma provides an explicit formula for the expectation
of $[N_t^{(n)}]_m$.
\begin{lemma} \label{mean}
   Fix $n\in\nz$ and $t\in [0,\infty)$. For the Bolthausen--Sznitman
   coalescent the random variable $N_t^{(n)}$ satisfies for all
   $m\in [0,\infty)$
   $$
   \me([N_t^{(n)}]_m)
   \ =\ \Gamma(m+1)\prod_{j=1}^{n-1}\frac{j+me^{-t}}{j}
   \ =\ \Gamma(m+1){{n-1+me^{-t}}\choose{n-1}}
   \ =\ \frac{\Gamma(m+1)}{\Gamma(1+me^{-t})}[n]_{me^{-t}}.
   $$
   In particular,
   $$
   \me(N_t^{(n)})
   \ =\ \prod_{j=1}^{n-1}\frac{j+e^{-t}}{j}
   \ =\ {{n-1+e^{-t}}\choose{n-1}}
   \ =\ \frac{1}{\Gamma(1+e^{-t})}[n]_{e^{-t}}
   \ =\ \frac{\Gamma(n+e^{-t})}{\Gamma(n)\Gamma(1+e^{-t})}
   $$
   and
   $$
   {\rm Var}(N_t^{(n)})
   \ =\ 2\prod_{j=1}^{n-1}\frac{j+2e^{-t}}{j} - \prod_{j=1}^{n-1}\frac{j+e^{-t}}{j}
   - \bigg(\prod_{j=1}^{n-1}\frac{j+e^{-t}}{j}\bigg)^2.
   $$
\end{lemma}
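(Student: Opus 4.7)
To prove the lemma I would verify that the claimed expression satisfies the Kolmogorov backward equation of $N^{(n)}$ with the correct initial condition. Fix $m\in[0,\infty)$ and, for $i\in\{1,\ldots,n\}$ and $t\ge 0$, define
$$
\phi(i,t)\ :=\ \Gamma(m+1)\prod_{j=1}^{i-1}\frac{j+me^{-t}}{j}\ =\ \Gamma(m+1){{i-1+me^{-t}}\choose{i-1}}.
$$
The plan is to show $\phi(i,t)=\me([N_t^{(n)}]_m\,|\,N_0^{(n)}=i)$ for every admissible $i$; the lemma is then the special case $i=n$, and the three algebraic rewritings in its statement as well as the formulas for $\me(N_t^{(n)})$ and ${\rm Var}(N_t^{(n)})$ follow from immediate Gamma-function identities together with $[N_t^{(n)}]_2=N_t^{(n)}(N_t^{(n)}+1)$.

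First I would check the initial condition $\phi(i,0)=[i]_m$: indeed $\Gamma(m+1)\prod_{j=1}^{i-1}(j+m)/j=\Gamma(i+m)/\Gamma(i)=[i]_m$. Next I would verify the backward ODE
$$
\partial_t\phi(i,t)\ =\ (1-i)\phi(i,t)+\sum_{j=1}^{i-1}\frac{i}{(i-j)(i-j+1)}\phi(j,t).
$$
Writing $\alpha:=me^{-t}$, so that $d\alpha/dt=-\alpha$, the logarithmic derivative of the product defining $\phi$ gives $\partial_t\phi(i,t)=-\alpha\,\phi(i,t)\sum_{j=1}^{i-1}1/(j+\alpha)$. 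On the right-hand side one has $\phi(j,t)/\phi(i,t)=\prod_{\ell=j}^{i-1}\ell/(\ell+\alpha)$, so after dividing through by $\phi(i,t)$ and substituting $k=i-j$ the ODE collapses to the single rational identity
$$
\sum_{k=1}^{i-1}\frac{i}{k(k+1)}\left[\prod_{\ell=i-k}^{i-1}\frac{\ell}{\ell+\alpha}-1\right]\ =\ -\alpha\sum_{j=1}^{i-1}\frac{1}{j+\alpha}
$$
in the single variable $\alpha$. Both sides are rational functions of $\alpha$ whose only singularities are simple poles at $\alpha=-1,\ldots,-(i-1)$; both vanish at $\alpha=0$ (each summand on the left does) and both tend to $-(i-1)$ as $\alpha\to\infty$ (on the left via $\sum_{k=1}^{i-1}i/(k(k+1))=i-1$, on the right by direct inspection). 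Matching residues at each of the $i-1$ simple poles, or a short induction on $i$ peeling off the $k=i-1$ term, then establishes equality.

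Uniqueness of solutions to the Kolmogorov backward system on the finite state space $\{1,\ldots,n\}$ now forces $\phi(i,t)=\me([N_t^{(n)}]_m\,|\,N_0^{(n)}=i)$ for every $i$ and $t$, and specializing to $i=n$ yields the lemma. The main obstacle is the displayed rational identity: it is elementary but not instantly obvious, and it is presumably what the hint about the spectral decomposition of $Q$ in \cite{moehlepitters} is used to circumvent. An alternative route along those lines substitutes the explicit right- and left-eigenvectors into $p_{ij}(t)=\sum_{k=1}^n e^{-(k-1)t}r_k(i)\ell_k(j)$ and evaluates $\sum_{j=1}^n p_{ij}(t)[j]_m$ directly, collapsing the resulting double sum by classical binomial identities; this trades the ODE verification for a combinatorial one but produces the same product formula.
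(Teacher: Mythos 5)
Your route is genuinely different from the paper's: the paper does not verify the backward equation at all, but instead substitutes the explicit spectral formula $p_{nj}(t)=(-1)^{n+j}\frac{\Gamma(j)}{\Gamma(n)}\sum_{k=j}^n e^{-(k-1)t}s(n,k)S(k,j)$ from \cite{moehlepitters} into $\me([N_t^{(n)}]_m)=\sum_j [j]_m p_{nj}(t)$ and collapses the double sum with the two classical identities $\sum_{j}\Gamma(j+m)(-1)^jS(k,j)=\Gamma(m)(-m)^k$ and $\sum_k s(n,k)x^k=(x)_n$. Your strategy (show the closed form solves the Kolmogorov backward system with the right initial condition, then invoke uniqueness on the finite state space) is sound in outline and would be self-contained, and your reductions are correct as far as they go: the initial condition $\phi(i,0)=[i]_m$ checks out, the row sums of $Q$ vanish, and the backward ODE does reduce to exactly the rational identity you display (I verified it for $i=2,3$).

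The genuine gap is that this rational identity is never actually proved, and neither of your two suggested closings is routine. Matching residues is a valid scheme (same simple poles, same residues, same limit $-(i-1)$ at infinity forces equality), but computing the residue of your left-hand side at $\alpha=-j_0$ gives
\[
\sum_{k=i-j_0}^{i-1}\frac{i}{k(k+1)}\;j_0\prod_{\substack{\ell=i-k\\ \ell\ne j_0}}^{i-1}\frac{\ell}{\ell-j_0},
\]
and showing this equals $j_0$ for every $j_0\in\{1,\dots,i-1\}$ is a family of alternating-sign identities essentially as hard as the original one; it is not a one-line check. Likewise, ``a short induction on $i$ peeling off the $k=i-1$ term'' does not obviously go through, because the inner products $\prod_{\ell=i-k}^{i-1}\ell/(\ell+\alpha)$ are re-indexed when $i$ decreases and do not telescope against the $i-1$ case. (An Abel summation does simplify your identity to $\sum_{l=1}^{i-1}\frac{l}{(i-l)(l+\alpha)}\prod_{\ell=l+1}^{i-1}\frac{\ell}{\ell+\alpha}=\sum_{l=1}^{i-1}\frac{1}{l+\alpha}$, but that is still not term-by-term and still needs an argument.) Until one of these is carried out, the proof is incomplete; the paper's Stirling-number computation is precisely the device that avoids this combinatorial bottleneck. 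A minor, fixable point: conditioning on $\{N_0^{(n)}=i\}$ for $i<n$ is conditioning on a null event, so you should phrase the claim in terms of the transition function $u(i,t):=\sum_j (e^{tQ})_{ij}[j]_m$ (or equivalently use the consistency $\me([N^{(n)}_t]_m\mid N^{(n)}_s=i)=\me([N^{(i)}_{t-s}]_m)$, which the paper uses in Lemma \ref{meangeneral}).
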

\begin{proof} (of Lemma \ref{mean})
   Fix $n\in\nz$ and $t\in [0,\infty)$. The formula obviously holds
   for $m=0$. Thus, we can assume that $m\in (0,\infty)$.
   Clearly, $\me([N_t^{(n)}]_m)=\sum_{j=1}^n [j]_m p_{nj}(t)$, where
   $p_{nj}(t):=\pr(N_t^{(n)}=j)$. In the following $s(.,.)$ and $S(.,.)$
   denote the Stirling number of the first and second kind respectively.
   Plugging in
   $$
   p_{nj}(t)
   \ =\ (-1)^{n+j}\frac{\Gamma(j)}{\Gamma(n)}\sum_{k=j}^n
   e^{-(k-1)t} s(n,k)S(k,j)
   $$
   (see \cite[Corollary 1.3]{moehlepitters}, Equation (1.3), corrected by
   an obviously missing sign factor $(-1)^{k+j}$) it follows that
   \begin{eqnarray*}
      \me([N_t^{(n)}]_m)
      & = & \sum_{j=1}^n [j]_m (-1)^{n+j}\frac{\Gamma(j)}{\Gamma(n)}
            \sum_{k=j}^n e^{-(k-1)t} s(n,k) S(k,j)\\
      & = & \frac{(-1)^n}{\Gamma(n)}e^t\sum_{k=1}^n s(n,k) (e^{-t})^k
            \sum_{j=1}^k \Gamma(j+m)(-1)^j S(k,j).
   \end{eqnarray*}
   Since $\Gamma(j+m)(-1)^j=\Gamma(m)[m]_j(-1)^j=
   \Gamma(m)(-m)(-m-1)\cdots(-m-j+1)=\Gamma(m)(-m)_j$,
   where $(x)_j:=x(x-1)\cdots (x-j+1)$,
   the last sum simplifies to
   $\sum_{j=1}^k \Gamma(j+m)(-1)^j S(k,j)
   =\Gamma(m)\sum_{j=1}^k (-m)_j S(k,j)
   =\Gamma(m)(-m)^k$. Thus,
   \begin{eqnarray*}
      &   & \hspace{-10mm}\me([N_t^{(n)}]_m)
      \ = \ \frac{(-1)^n}{\Gamma(n)}e^t\sum_{k=1}^n s(n,k)(e^{-t})^k
            \Gamma(m)(-m)^k\\
      & = & \Gamma(m)\frac{(-1)^n}{\Gamma(n)}e^t\sum_{k=1}^n s(n,k) (-me^{-t})^k
      \ = \ \Gamma(m)\frac{(-1)^n}{\Gamma(n)}e^t(-me^{-t})_n
      \ = \ \frac{\Gamma(m)}{\Gamma(n)}e^t[me^{-t}]_n\\
      & = & \Gamma(m+1)\prod_{j=1}^{n-1}\frac{j+me^{-t}}{j}
      \ = \ \Gamma(m+1){{n-1+me^{-t}}\choose {n-1}}
      \ = \ \frac{\Gamma(m+1)}{\Gamma(1+me^{-t})}
            [n]_{me^{-t}}.
   \end{eqnarray*}
   Choosing $m=1$ the formula for the mean of $N_t^{(n)}$ follows immediately.
   The formula for the variance of $N_t^{(n)}$ follows
   from ${\rm Var}(N_t^{(n)})=\me([N_t^{(n)}]_2)-\me(N_t^{(n)})-
   (\me(N_t^{(n)}))^2$.\hfill$\Box$
\end{proof}
The following result (Lemma \ref{meangeneral}) is a generalization of
Lemma \ref{mean}. It 
will turn out to be quite useful later in order to verify the main convergence result
(Theorem \ref{main}).
\begin{lemma} \label{meangeneral}
   Let $k\in\nz$, $0=t_0\le t_1<t_2<\cdots<t_k$ and $m_1,\ldots,m_k\in [0,\infty)$.
   For $j\in\{0,\ldots,k\}$ define
   $x_j:=x_j(k):=\sum_{i=j+1}^k m_ie^{-(t_i-t_j)}$. Note that
   $0=x_k\le x_{k-1}\le\cdots\le x_2\le x_1\le x_0=\sum_{i=1}^k m_ie^{-t_i}$. Then
   \begin{equation} \label{momentgeneral}
      \me\bigg(\prod_{j=1}^k [N_{t_j}^{(n)}+x_j]_{m_j}\bigg)
      \ =\ [n]_{x_0}\prod_{j=1}^k \frac{\Gamma(1+x_j+m_j)}{\Gamma(1+x_{j-1})}.
   \end{equation}
\end{lemma}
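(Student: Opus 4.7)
The plan is to prove (\ref{momentgeneral}) by induction on $k$, closely paralleling the proof of Lemma \ref{xmoments}. For the base case $k=1$ we have $x_0=m_1e^{-t_1}$ and $x_1=0$, so the claim reduces to
\begin{equation*}
\me([N_{t_1}^{(n)}]_{m_1})\ =\ [n]_{m_1e^{-t_1}}\frac{\Gamma(1+m_1)}{\Gamma(1+m_1e^{-t_1})},
\end{equation*}
which is exactly Lemma \ref{mean} applied with $t:=t_1$ and $m:=m_1$.

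For the inductive step from $k-1$ to $k$, condition the expectation on $(N_{t_1}^{(n)},\ldots,N_{t_{k-1}}^{(n)})$. Since $x_k=0$, the rightmost factor is $[N_{t_k}^{(n)}]_{m_k}$, and by the Markov property of $N^{(n)}$ together with Lemma \ref{mean} (applied to a coalescent starting from $N_{t_{k-1}}^{(n)}$ blocks and run for time $t_k-t_{k-1}$),
\begin{equation*}
\me([N_{t_k}^{(n)}]_{m_k}\,|\,N_{t_1}^{(n)},\ldots,N_{t_{k-1}}^{(n)})\ =\ \frac{\Gamma(1+m_k)}{\Gamma(1+m_ke^{-(t_k-t_{k-1})})}\,[N_{t_{k-1}}^{(n)}]_{m_ke^{-(t_k-t_{k-1})}}.
\end{equation*}
Noting that $m_ke^{-(t_k-t_{k-1})}=x_{k-1}$, the scalar prefactor is precisely $\Gamma(1+x_k+m_k)/\Gamma(1+x_{k-1})$, i.e.\ the $j=k$ factor appearing on the right-hand side of (\ref{momentgeneral}).

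To absorb the residual $[N_{t_{k-1}}^{(n)}]_{x_{k-1}}$ into the neighbouring factor $[N_{t_{k-1}}^{(n)}+x_{k-1}]_{m_{k-1}}$, apply the ascending-factorial identity $[y+a]_b[y]_a=[y]_{a+b}$ (immediate from $[y]_c=\Gamma(y+c)/\Gamma(y)$) with $y=N_{t_{k-1}}^{(n)}$, $a=x_{k-1}$, $b=m_{k-1}$, obtaining $[N_{t_{k-1}}^{(n)}+\tilde x_{k-1}]_{\tilde m_{k-1}}$ with $\tilde x_{k-1}:=0$ and $\tilde m_{k-1}:=m_{k-1}+m_ke^{-(t_k-t_{k-1})}$. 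Putting $\tilde m_j:=m_j$ for $1\le j\le k-2$ and $\tilde x_j:=\sum_{i=j+1}^{k-1}\tilde m_ie^{-(t_i-t_j)}$, the same telescoping computation appearing at the end of the proof of Lemma \ref{xmoments} gives $\tilde x_j=x_j$ for $0\le j\le k-2$, so applying the induction hypothesis to $\me(\prod_{j=1}^{k-1}[N_{t_j}^{(n)}+\tilde x_j]_{\tilde m_j})$ and multiplying by the scalar prefactor from the conditioning step yields (\ref{momentgeneral}).

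The main obstacle is purely combinatorial bookkeeping: one must check carefully that the reindexed quantities $\tilde x_j,\tilde m_j$ reproduce exactly the remaining $j=1,\ldots,k-1$ factors on the right-hand side of (\ref{momentgeneral}) and that the prefactor peeled off in the conditioning step slots in as the missing $j=k$ factor. Once this is in place the argument is structurally identical to Lemma \ref{xmoments}, the only extra ingredient being the $[n]_{x_0}$ prefactor, which is inherited directly from the $[n]_{me^{-t}}$ factor in Lemma \ref{mean}.
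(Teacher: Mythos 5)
Your proposal is correct and follows essentially the same route as the paper: induction on $k$ with base case Lemma \ref{mean}, conditioning on $(N_{t_1}^{(n)},\ldots,N_{t_{k-1}}^{(n)})$, using time-homogeneity plus Lemma \ref{mean} to evaluate $\me([N_{t_k}^{(n)}]_{m_k}\,|\,N_{t_{k-1}}^{(n)})$, absorbing the residual factor via $[y+a]_b[y]_a=[y]_{a+b}$, and checking $\tilde x_j=x_j$ by the same telescoping as in Lemma \ref{xmoments}. No gaps.
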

\begin{proof} (of Lemma \ref{meangeneral})
   Induction on $k$. For $k=1$ the assertion holds by Lemma \ref{mean}.
   The induction step from $k-1$ to $k$ ($\ge 2$) works as follows. We have
   \begin{eqnarray}
      \me\bigg(\prod_{j=1}^k [N_{t_j}^{(n)}+x_j]_{m_j}\bigg)
      & = & \me\bigg(\me\bigg(\prod_{j=1}^k[N_{t_j}^{(n)}+x_j]_{m_j}\bigg| N_{t_1}^{(n)},\ldots,N_{t_{k-1}}^{(n)}\bigg)\bigg)\nonumber\\
      & = & \me\bigg(\prod_{j=1}^{k-1}[N_{t_j}^{(n)}+x_j]_{m_j}
               \me([N_{t_k}^{(n)}]_{m_k}\,|\,N_{t_{k-1}}^{(n)})
            \bigg), \label{local}
   \end{eqnarray}
   since $x_k=0$. The process $N^{(n)}=(N_t^{(n)})_{t\ge 0}$ is time-homogeneous.
   Thus, for all $j\in\{1,\ldots,n\}$,
   $$
   \me([N_{t_k}^{(n)}]_{m_k}\,|\,N_{t_{k-1}}^{(n)}=j)
   \ =\ \me([N_{t_k-t_{k-1}}^{(j)}]_{m_k})
   \ =\ \frac{\Gamma(1+m_k)}{\Gamma(1+m_ke^{-(t_k-t_{k-1})})}[j]_{m_ke^{-(t_k-t_{k-1})}},
   $$
   where the last equality holds by Lemma \ref{mean}. Thus,
   $$
   \me([N_{t_k}^{(n)}]_{m_k}\,|\,N_{t_{k-1}}^{(n)})\ =\ \frac{\Gamma(1+m_k)}{\Gamma(1+m_ke^{-(t_k-t_{k-1})})}
   [N_{t_{k-1}}^{(n)}]_{m_ke^{-(t_k-t_{k-1})}}.
   $$
   Plugging this into (\ref{local}) yields
   \begin{eqnarray*}
      \me\bigg(\prod_{j=1}^k [N_{t_j}^{(n)}+x_j]_{m_j}\bigg)
      & = & \frac{\Gamma(1+m_k)}{\Gamma(1+m_ke^{-(t_k-t_{k-1})})}
            \me\bigg(
               \bigg(\prod_{j=1}^{k-1}[N_{t_j}^{(n)}+x_j]_{m_j}\bigg)
               [N_{t_{k-1}}^{(n)}]_{m_ke^{-(t_k-t_{k-1})}}
            \bigg)\\
      & = & \frac{\Gamma(1+x_k+m_k)}{\Gamma(1+x_{k-1})}
            \me\bigg(
               \prod_{j=1}^{k-1}[N_{t_j}^{(n)}+y_j]_{\tilde{m}_j}
            \bigg),
   \end{eqnarray*}
   where $y_j:=x_j$ and $\tilde{m}_j:=m_j$ for $0\le j\le k-2$, $y_{k-1}:=0$
   and $\tilde{m}_{k-1}:=m_{k-1}+x_{k-1}$. By induction,
   $$
   \me\bigg(
      \prod_{j=1}^{k-1} [N_{t_j}^{(n)}+y_j]_{\tilde{m}_j}
   \bigg)
   \ =\ [n]_{y_0}\prod_{j=1}^{k-1}\frac{\Gamma(1+y_j+\tilde{m}_j)}{\Gamma(1+y_{j-1})}
   \ =\ [n]_{x_0}\prod_{j=1}^{k-1}
        \frac{\Gamma(1+x_j+m_j)}{\Gamma(1+x_{j-1})},
   $$
   and (\ref{momentgeneral}) follows immediately, which completes the
   induction.\hfill$\Box$
\end{proof}
\subsection{Proof of Theorem \ref{main}} \label{proof}
\setcounter{theorem}{0}
   The $\sigma$-algebra generated by $X_t^{(n)}$ coincides with
   the $\sigma$-algebra generated by $N_t^{(n)}$. Thus, the Markov property
   of the block counting process $N^{(n)}$ 
   carries over to the scaled block counting process $X^{(n)}$. 
   Note however that the process $X^{(n)}$ is time-inhomogeneous whereas
   $N^{(n)}$ is time-homogeneous.

   As a warming up we first verify the convergence of the
   finite-dimensional distributions. Afterwards we turn to the
   convergence in $D_E[0,\infty)$. Since the proof of the
   convergence of the one-dimensional distributions turns out to be less
   technical, we start with a consideration of the one-dimensional
   distributions.

   \vspace{2mm}

   {\bf Step 1.} (Convergence of the one-dimensional distributions)
   Recall that $S(.,.)$ denote the Stirling numbers of the
   second kind. Fix $t\in [0,\infty)$. Applying the formula
   \begin{equation} \label{stirlingspecial}
      x^m\ =\ \sum_{i=0}^m (-1)^{m-i}S(m,i)[x]_i,\qquad m\in\nz_0,
   \end{equation}
   it follows that
   $$ 
      \me((X_t^{(n)})^m)
      \ = \ \frac{1}{n^{me^{-t}}}\me((N_t^{(n)})^m)
      \ = \ \sum_{i=0}^m (-1)^{m-i} S(m,i)\frac{\me([N_t^{(n)}]_i)}{n^{me^{-t}}},
      \quad n\in\nz, m\in\nz_0.
   $$
   By Lemma \ref{mean}, $\me([N_t^{(n)}]_i)
   =(\Gamma(i+1)/\Gamma(1+ie^{-t}))[n]_{ie^{-t}}
   =\me(X_t^i)[n]_{ie^{-t}}$, leading to
   $$
   \me((X_t^{(n)})^m)\ =\ \sum_{i=0}^m (-1)^{m-i} S(m,i)\me(X_t^i)
   \frac{[n]_{ie^{-t}}}{n^{me^{-t}}},\qquad n\in\nz, m\in\nz_0.
   $$
   Letting $n\to\infty$ shows that
      $\lim_{n\to\infty}\me((X_t^{(n)})^m)
      =\me(X_t^m)$ for all $m\in\nz_0$.
   This convergence of moments implies
   (see, for example, \cite[Theorems 30.1 and 30.2]{billingsley}),
   the convergence
   $X_t^{(n)}\to X_t$ in distribution as $n\to\infty$. Thus, the
   convergence of the one-dimensional distributions holds.

   \vspace{2mm}

   {\bf Step 2.} (Convergence of the finite-dimensional distributions)
   Let us now turn to the convergence of the $k$-dimensional
   distributions, $k\in\nz$. Fix $0=t_0\le t_1<t_2<\cdots <t_k<\infty$ and
   $m_1,\ldots,m_k\in [0,\infty)$. For $j\in\{0,\ldots,k\}$ define
   $x_j:=x_j(k):=\sum_{i=j+1}^k m_i e^{-(t_i-t_j)}$.
   Note that $x_k=0$ and that $x_0=\sum_{i=1}^k m_ie^{-t_i}$.
   We have
   \begin{eqnarray*}
      \prod_{j=1}^k \bigg(X_{t_j}^{(n)}+\frac{x_j}{n^{e^{-t_j}}}\bigg)^{m_j}
      & = & \prod_{j=1}^k \frac{(N_{t_j}^{(n)}+x_j)^{m_j}}{n^{me^{-t_j}}}
      \ = \ \frac{1}{n^{x_0}}\prod_{j=1}^k (N_{t_j}^{(n)}+x_j)^{m_j}.
   \end{eqnarray*}
   Applying (\ref{stirlingspecial}) it follows that
   \begin{eqnarray*}
      \prod_{j=1}^k \bigg(X_{t_j}^{(n)}+\frac{x_j}{n^{e^{-t_j}}}\bigg)^{m_j}
      & = & \frac{1}{n^{x_0}}\prod_{j=1}^k \bigg(\sum_{i_j=0}^{m_j} (-1)^{m_j-i_j} S(m_j,i_j) [N_{t_j}^{(n)}+x_j]_{i_j}\bigg)\\
      & = & \frac{1}{n^{x_0}}
           \sum_{i_1\le m_1,\ldots,i_k\le m_k}
           \bigg(\prod_{j=1}^k (-1)^{m_j-i_j}S(m_j,i_j)\bigg)
           \bigg(\prod_{j=1}^k [N_{t_j}^{(n)}+x_j]_{i_j}\bigg).
   \end{eqnarray*}
   Taking expectation yields
   \begin{eqnarray}
      &   & \hspace{-2cm}
      \me\bigg(
         \prod_{j=1}^k
         \bigg(X_{t_j}^{(n)}+\frac{x_j}{n^{e^{-t_j}}}\bigg)^{m_j}
      \bigg)\nonumber\\
      & = & \sum_{i_1\le m_1,\ldots,i_k\le m_k}
            \bigg(\prod_{j=1}^k (-1)^{m_j-i_j}S(m_j,i_j)\bigg)
            \frac{1}{n^{x_0}}
            \me\bigg(\prod_{j=1}^k [N_{t_j}^{(n)}+x_j]_{i_j}\bigg).
            \label{local2}
   \end{eqnarray}
   By Lemma \ref{meangeneral}, the last expectation is $O(n^{\sum_{j=1}^k i_je^{-t_j}})$
   and
   $$
   \me(\prod_{j=1}^k [N_{t_j}^{(n)}+x_j]_{m_j})
   \ =\ [n]_{x_0}\prod_{j=1}^k \frac{\Gamma(1+x_j+m_j)}{\Gamma(1+x_{j-1})}
   \ =\ [n]_{x_0}\me(X_{t_1}^{m_1}\cdots X_{t_k}^{m_k}),
   $$
   where the last equality holds by Eq.~(\ref{jointmoments}) from Lemma
   \ref{xmoments}. Thus, letting $n\to\infty$ in (\ref{local2}) yields
   \begin{equation} \label{local3}
      \lim_{n\to\infty}
      \me\bigg(
         \prod_{j=1}^k\bigg(X_{t_j}^{(n)}+\frac{x_j}{n^{e^{-t_j}}}\bigg)^{m_j}
      \bigg)
      \ =\ \me(X_{t_1}^{m_1}\cdots X_{t_k}^{m_k}).
   \end{equation}
   In order to get rid of the disturbing fractions
   $x_j/n^{e^{-t_j}}$ on the left hand side in (\ref{local3})
   one may use the binomial formula
   $$
   \bigg(X_{t_j}^{(n)}+\frac{x_j}{n^{e^{-t_j}}}\bigg)^{m_j}
   \ =\ \sum_{l_j=0}^{m_j} {{m_j}\choose{l_j}}
    \bigg(\frac{x_j}{n^{e^{-t_j}}}\bigg)^{m_j-l_j}(X_{t_j}^{(n)})^{l_j}
   $$
   and conclude from (\ref{local3}) by induction on
   $m:=m_1+\cdots+m_k\in\nz_0$ that
   \begin{equation} \label{joint}
      \lim_{n\to\infty}\me((X_{t_1}^{(n)})^{m_1}\cdots(X_{t_k}^{(n)})^{m_k})
      \ =\ \me(X_{t_1}^{m_1}\cdots X_{t_k}^{m_k}),
      \qquad m_1,\ldots,m_k\in\nz_0.
   \end{equation}
   This convergence of moments implies (see, for example,
   \cite[Problems 30.5 and 30.6]{billingsley}) the convergence
   $(X_{t_1}^{(n)},\ldots,X_{t_k}^{(n)})\to (X_{t_1},\ldots,X_{t_k})$ in
   distribution as $n\to\infty$. Thus, the convergence of the
   finite-dimensional distributions holds.

   \vspace{2mm}

   {\bf Step 3.} (Preparing the proof of the convergence in $D_E[0,\infty)$)

   Let $M(E)$ denote the set of all measurable functions
   $f:E\to\rz$.
   Define $E_n(s):=\{j/n^{e^{-s}}:j\in\{1,\ldots,n\}\}$ for
   all $n\in\nz$ and all $s\in [0,\infty)$ and
   $$
   T_{s,t}^{(n)}f(x)\ :=\ \me(f(X_{s+t}^{(n)})\,|\,X_s^{(n)}=x),
   \qquad n\in\nz, s,t\in [0,\infty), f\in M(E), x\in E_n(s),
   $$
   Note that $(T_{s,t}^{(n)})_{s,t\ge 0}$ is the semigroup of the
   time-inhomogeneous Markov process $X^{(n)}$. Let us verify
   that, for all $s,t\in [0,\infty)$, all polynomials
   $p:E\to\rz$ and all compact sets $K\subseteq E$,
   \begin{equation} \label{polynomial}
      \lim_{n\to\infty} \sup_{x\in E_n(s)\cap K}
      |T_{s,t}^{(n)}p(x)-T_tp(x)|\ =\ 0.
   \end{equation}
   For $m\in\nz_0$ let $p_m:E\to\rz$ denote the $m$-th monomial defined
   via $p_m(x):=x^m$, $x\in E$. Fix $s,t\in [0,\infty)$ and a compact set
   $K\subseteq E$. For $n\in\nz$, $m\in\nz_0$ and $x\in E_n(s)$
   we have
   $$
   T_{s,t}^{(n)}p_m(x)
   \ =\ \me((X_{s+t}^{(n)})^m\,|\,X_s^{(n)}=x)
   \ =\ \frac{\me((N_{s+t}^{(n)})^m\,|\,N_s^{(n)}=xn^{e^{-s}})}{n^{me^{-(s+t)}}}
   \ =\ \frac{\me((N_t^{(xn^{e^{-s}})})^m)}{n^{me^{-(s+t)}}},
   $$
   where the last equality holds since the block counting process
   $N^{(n)}=(N_t^{(n)})_{t\ge 0}$ is time-homogeneous. By
   (\ref{stirlingspecial}) and Lemma \ref{mean} it follows that
   $$
   T_{s,t}^{(n)}p_m(x)
   \ =\ \sum_{i=0}^m (-1)^{m-i}S(m,i)
        \frac{\me([N_t^{(xn^{e^{-s}})}]_i)}{n^{me^{-(s+t)}}}
   \ =\ \sum_{i=0}^m (-1)^{m-i}S(m,i) \me(X_t^i)
        \frac{[xn^{e^{-s}}]_{ie^{-t}}}{n^{me^{-(s+t)}}}.
   $$
   Since $T_tp_m(x)=\me(p_m(x^{e^{-t}}X_t))=\me(X_t^m)x^{me^{-t}}$ it
   follows that
   $$
   T_{s,t}^{(n)}p_m(x) - T_tp_m(x)
   \ =\ \me(X_t^m)\bigg(\frac{[xn^{e^{-s}}]_{me^{-t}}}{n^{me^{-(s+t)}}}-x^{me^{-t}}\bigg)
        + \sum_{i=0}^{m-1}
        (-1)^{m-i}S(m,i)\me(X_t^i)
        \frac{[xn^{e^{-s}}]_{ie^{-t}}}{n^{me^{-(s+t)}}}.
   $$
   It is straightforward to check that this expression converges uniformly
   for all $x\in E_n(s)\cap K$ (even uniformly for all $x$ in any compact
   subset of $E$) to zero as $n\to\infty$. Therefore, (\ref{polynomial})
   holds for every monomial $p:=p_m$, $m\in\nz_0$, and, by linearity, for all
   polynomials $p:E\to\rz$.

   \vspace{2mm}

   {\bf Step 4.} (Convergence in $D_E[0,\infty)$)
   According to a time-inhomogeneous variant of
   \cite[p.~167, Theorem 2.5]{ethierkurtz} it suffices to verify that for all
   $s,t\in [0,\infty)$ and all $f\in\widehat{C}(E)$,
   \begin{equation} \label{toshow}
      \lim_{n\to\infty}\sup_{x\in E_n(s)}
      |T_{s,t}^{(n)}f(x) - T_tf(x)|\ =\ 0.
   \end{equation}
   Fix $s,t\in [0,\infty)$ and $f\in\widehat{C}(E)$.
   Without loss of generality we may assume that $\|f\|>0$.
   Let $\varepsilon>0$.
   Since $f\in\widehat{C}(E)$ and $T_tf\in\widehat{C}(E)$, there exists
   a constant $x_0=x_0(\varepsilon)\ge 1$ such that $|f(x)|<\varepsilon$
   and $|T_tf(x)|<\varepsilon$
   for all $x>x_0$. Moreover, since $\pr(X_{s+t}\le x_0\,|\,X_s=x)=T_t 1_{(-\infty,x_0]}(x)=
   \pr(x^{e^{-t}}X_t\le x_0)=\pr(X_t\le x_0/x^{e^{-t}})\to \pr(X_t\le 0)=0$
   as $x\to\infty$, we can choose a real constant $L=L(\varepsilon)\ge x_0$
   sufficiently large such that
   $\pr(X_{s+t}\le x_0\,|\,X_s=x)<\varepsilon/\|f\|$ for all $x\ge L$.
   For all $n\in\nz$ and all $x\in E_n(s)$ we have
   \begin{eqnarray*}
      |T_{s,t}^{(n)}f(x)|
      & \le & \me(|f(X_{s+t}^{(n)})|\,|\,X_s^{(n)}=x)\\
      & = & \me(|f(X_{s+t}^{(n)})|\,1_{\{X_{s+t}^{(n)}>x_0\}}\,|\,X_s^{(n)}=x) +
           \me(|f(X_{s+t}^{(n)})|\,1_{\{X_{s+t}^{(n)}\le x_0\}}\,|\,X_s^{(n)}=x)\\
      & \le & \varepsilon + \|f\|\,\pr(X_{s+t}^{(n)}\le x_0\,|\,X_s^{(n)}=x).
   \end{eqnarray*}
   By Step 2, the convergence of the two-dimensional distributions holds.
   In particular, for every $x\ge 1$, $\pr(X_{s+t}^{(n)}\le x_0\,|\,X_s^{(n)}
   =\lfloor xn^{e^{-s}}\rfloor/n^{e^{-s}})$ converges to
   $\pr(X_{s+t}\le x_0\,|\,X_s=x)$ as $n\to\infty$ pointwise for all $x\ge 1$.
   Since the map $x\mapsto\pr(X_{s+t}\le x_0\,|\,X_s=x)=
   T_t1_{[0,x_0]}(x)=\me(1_{[0,x_0]}(x^{e^{-t}}X_t))=\pr(x^{e^{-t}}X_t\le x_0)$, $x\ge 1$,
   is continuous, non-increasing and bounded, this convergence holds even
   uniformly for all $x\ge 1$. [The proof of this uniform convergence
   works essentially the same as the proof that pointwise convergence
   of distribution functions holds even uniform, if the limiting
   distribution function is continuous.]
   Thus, there exists $n_0=n_0(\varepsilon)\in\nz$
   such that $\pr(X_{s+t}^{(n)}\le x_0\,|\,X_s^{(n)}=x)\le
   \pr(X_{s+t}\le x_0\,|\,X_s=x)+\varepsilon/\|f\|$ for all $n>n_0$ and all
   $x\in E_n(s)\cap [1,\infty)$.
   For all $n\in\nz$ with $n>n_0$ and all $x\in E_n(s)\cap [L,\infty)$
   it follows that
   \begin{eqnarray*}
      |T_{s,t}^{(n)}f(x)|
      & \le & \varepsilon + \|f\|\,\bigg(\pr(X_{s+t}\le x_0\,|\,X_s=x)+\frac{\varepsilon}{\|f\|}\bigg)\\
      & = & 2\varepsilon + \|f\|\,\pr(X_{s+t}\le x_0\,|\,X_s=x)
      \ \le\ 3\varepsilon.
   \end{eqnarray*}
   Thus, for all $n>n_0$,
   \begin{eqnarray*}
      \sup_{x\in E_n(s)\cap [L,\infty)}|T_{s,t}^{(n)}f(x)-T_tf(x)|
      & \le & \sup_{x\in E_n(s)\cap [L,\infty)}|T_{s,t}^{(n)}f(x)|
      + \sup_{x\in E_n(s)\cap [L,\infty)} |T_tf(x)|\\
      & \le & 3\varepsilon + \varepsilon
      \ =\ 4\varepsilon.
   \end{eqnarray*}
   Thus it is shown that
   $$
   \lim_{n\to\infty} \sup_{x\in E_n(s)\cap [L,\infty)}|T_{s,t}^{(n)}f(x)-T_tf(x)|\ =\ 0.
   $$
   Defining $K:=[0,L]$ it remains to verify that
   \begin{equation} \label{compact}
      \lim_{n\to\infty} \sup_{x\in E_n(s)\cap K}
      |T_{s,t}^{(n)}f(x)-T_tf(x)|\ =\ 0.
   \end{equation}
   By the Tschebyscheff-Markov inequality, for all $y>0$ and all
   $x\in E_n(s)\cap K$,
   $$
   T_t1_{(y,\infty)}(x)
   \ =\
   \pr(X_{s+t}>y\,|\,X_s=x)
   \ \le\ \frac{1}{y}\me(X_{s+t}\,|\,X_s=x)
   \ =\ \frac{1}{y} \me(X_t)x^{e^{-t}}
   \ \le\ \frac{1}{y}\me(X_t)L^{e^{-t}}.
   $$
   Moreover, making again use of the Tschebyscheff-Markov inequality
   and using Lemma \ref{mean}, for all $y>0$ and all $x\in E_n(s)\cap K$,
   \begin{eqnarray*}
      T_{s,t}^{(n)}1_{(y,\infty)}(x)
      & = & \pr(X_{s+t}^{(n)}>y\,|\,X_s^{(n)}=x)
      \ \le \ \frac{1}{y}\me(X_{s+t}^{(n)}\,|\,X_s^{(n)}=x)
      \ = \ \frac{1}{y}\me(X_t)\frac{[xn^{e^{-s}}]_{e^{-t}}}{n^{e^{-(s+t)}}}\\
      & \le & \frac{1}{y}\me(X_t)\frac{[Ln^{e^{-s}}]_{e^{-t}}}{n^{e^{-(s+t)}}}
      \ \sim\ \frac{1}{y}\me(X_t)L^{e^{-t}},\qquad n\to\infty.
   \end{eqnarray*}
   Thus, we can choose a real constant $y_0=y_0(\varepsilon)\ge x_0$
   (which may depend on $s$, $t$ and $L$ but not on $n$) sufficiently large
   such that
   \begin{equation} \label{epsbound}
      T_t 1_{(y_0,\infty)}(x) 
      \ \le\ \varepsilon
      \quad\mbox{and}\quad
      T_{s,t}^{(n)}1_{(y_0,\infty)}(x)
      \ \le\ \varepsilon
   \end{equation}
   for all $n\in\nz$ and all $x\in E_n(s)\cap K$. With this choice
   of $y_0$ we are now able to verify (\ref{compact}) as follows. Since
   $|f(y)|<\varepsilon$ for all $y>x_0$ and, hence, for all $y>y_0$, we
   obtain for all $n\in\nz$ and all $x\in E_n(s)$
   $$
   |T_{s,t}^{(n)}f(x)-T_tf(x)|
   \ \le\ 2\varepsilon + |T_{s,t}^{(n)}g(x)-T_tg(x)|,
   $$
   where $g:=f 1_{[0,y_0]}$. By the Weierstrass approximation theorem
   we can approximate the continuous function $f$ uniformly on the compact
   interval $[0,y_0]$ by a polynomial $p$. Hence, there exists a polynomial
   $p$ such that $\|g-h\|<\varepsilon$, where $h:=p1_{[0,y_0]}$. Thus,
   for all $n\in\nz$ and all $x\in E_n(s)$
   $$
   |T_{s,t}^{(n)}f(x)-T_tf(x)|
   \ \le\ 4\varepsilon + |T_{s,t}^{(n)}h(x)-T_th(x)|
   \ \le\ 4\varepsilon + |T_{s,t}^{(n)}p(x)-T_tp(x)|
   + |T_{s,t}^{(n)}r(x)| + |T_tr(x)|,
   $$
   where $r:=p-h=p-p1_{[0,y_0]}=p1_{(y_0,\infty)}$. We have
   already shown in Step 3 that
   $$
   \lim_{n\to\infty}
   \sup_{x\in E_n(s)\cap K}|T_{s,t}^{(n)}p(x)-T_tp(x)|\ =\ 0.
   $$
   Thus it remains to treat $|T_{s,t}^{(n)}r(x)|$ and $|T_tr(x)|$.
   Applying the H\"older inequality and using (\ref{epsbound}) we obtain
   $$
   |T_{s,t}^{(n)}r(x)|
   \ \le\ T_{s,t}^{(n)} p^2(x)\,T_{s,t}^{(n)} 1_{(y_0,\infty)}(x)
   \ \le\ \varepsilon T_{s,t}^{(n)} p^2(x)
   $$
   for all $n\in\nz$ and all $x\in E_n(s)\cap K$.
   Thus it remains to show that $T_{s,t}^{(n)}p^2(x)$ is bounded
   uniformly for all $x\in E_n(s)\cap K$. We have
   $$
   \sup_{x\in E_n(s)\cap K}|T_{s,t}^{(n)}p^2(x)|
   \ \le\
   \sup_{x\in E_n(s)\cap K}|T_{s,t}^{(n)}p^2(x) - T_t p^2(x)|
   + \sup_{x\in K} |T_tp^2(x)|.
   $$
   Since $p^2$ is a polynomial, the first expression converges to zero
   as $n\to\infty$ by Step 3. The last supremum is obviously bounded, since
   $T_tp^2$ is continuous and hence bounded on the compact set $K$, i.e.
   $M:=\sup_{x\in K}|T_tp^2(x)|<\infty$.
   Similarly, by H\"older inequality and (\ref{epsbound}),
   $|T_tr(x)|\le T_tp^2(x) T_t1_{(y_0,\infty)}(x)\le
   \varepsilon T_tp^2(x)\le\varepsilon M$ for all $x\in E_n(s)\cap K$.
   In summary, (\ref{compact}) is established. The proof is complete.
   \hfill$\Box$
\subsection{Appendix}
In this appendix we collect essentially two results. The first
result (Lemma \ref{laplace}) concerns the Laplace exponent of the
subordinator $S$ introduced at the beginning of Section \ref{mittag}.
The second result (Lemma \ref{applemma}) concerns some fundamental
properties of the semigroup $(T_t)_{t\ge 0}$ defined via (\ref{semigroup}).
\begin{lemma} \label{laplace}
   Fix $\alpha\in (0,1)$. The drift-free subordinator $S=(S_t)_{t\ge 0}$
   with killing rate $k:=1/\Gamma(1-\alpha)$ and L\'evy measure
   $\varrho$ with density (\ref{rho}) has Laplace exponent
   (\ref{phi}).
\end{lemma}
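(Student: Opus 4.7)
The plan is to compute the Laplace exponent directly from its definition for a killed drift-free subordinator, namely
\[
   \Phi(x)\ =\ k + \int_0^\infty (1-e^{-xu})\,\varrho({\rm d}u),
\]
and verify that this evaluates to $\Gamma(1+\alpha x)/\Gamma(1-\alpha+\alpha x)$. The integrability near $u=0$ is harmless since the density $u\mapsto e^{-u/\alpha}(1-e^{-u/\alpha})^{-\alpha-1}/\Gamma(1-\alpha)$ behaves like a constant times $u^{-\alpha-1}$ at the origin, while $1-e^{-xu}$ vanishes linearly; at infinity the density decays exponentially.

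The key step will be the substitution $v:=1-e^{-u/\alpha}$, which sends $(0,\infty)$ onto $(0,1)$, satisfies ${\rm d}u=(\alpha/(1-v))\,{\rm d}v$, and turns $e^{-xu}$ into $(1-v)^{\alpha x}$. After cancellation this reduces the integral to
\[
   \frac{\alpha}{\Gamma(1-\alpha)}\int_0^1 \frac{1-(1-v)^{\alpha x}}{v^{\alpha+1}}\,{\rm d}v.
\]
I would then integrate by parts, differentiating $1-(1-v)^{\alpha x}$ and integrating $v^{-\alpha-1}$. The boundary term at $v=1$ contributes $-1/\alpha$ (which will cancel the killing-rate contribution), the boundary term at $v=0$ vanishes because $\alpha<1$, and the remaining integral is a Beta integral $x\int_0^1 (1-v)^{\alpha x-1}v^{-\alpha}{\rm d}v=xB(1-\alpha,\alpha x)=x\Gamma(1-\alpha)\Gamma(\alpha x)/\Gamma(1-\alpha+\alpha x)$.

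Using $x\Gamma(\alpha x)=\Gamma(1+\alpha x)/\alpha$, everything simplifies to
\[
   \int_0^\infty(1-e^{-xu})\,\varrho({\rm d}u)\ =\ -\frac{1}{\Gamma(1-\alpha)}+\frac{\Gamma(1+\alpha x)}{\Gamma(1-\alpha+\alpha x)},
\]
and adding $k=1/\Gamma(1-\alpha)$ yields the claimed formula. The only genuine obstacle is the integration by parts, and in particular checking that the $v=0$ boundary term really vanishes; the rest is bookkeeping with the Beta and Gamma functions. The cancellation of the killing rate with the $-1/\alpha$ boundary term is exactly the reason the value $k=1/\Gamma(1-\alpha)$ is chosen.
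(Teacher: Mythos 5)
Your proposal is correct and follows essentially the same route as the paper: the L\'evy--Khintchine formula, the substitution $v=1-e^{-u/\alpha}$ reducing the integral to $\frac{\alpha}{\Gamma(1-\alpha)}\int_0^1(1-(1-v)^{\alpha x})v^{-\alpha-1}\,{\rm d}v$, and integration by parts yielding the Beta integral $B(1-\alpha,\alpha x)$, with the boundary term cancelling the killing rate. The only (immaterial) difference is where the factor $\alpha$ is carried through the integration by parts.
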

\begin{proof}
   By the L\'evy-Khintchine representation, the subordinator $S$ has Laplace
   exponent $\Phi(x)=k+\int_{(0,\infty)} (1-e^{-xu})\,\varrho({\rm d}u)$,
   $x\in [0,\infty)$. Since $\varrho$ has density (\ref{rho}) it follows that
   $$
   \Phi(x)\ =\ k + \frac{1}{\Gamma(1-\alpha)}\int_0^\infty (1-e^{-xu})
   \frac{e^{-u/\alpha}}{(1-e^{-u/\alpha})^{\alpha+1}}\,{\rm d}u.
   $$
   The substitution $y=1-e^{-u/\alpha}$ ($\Rightarrow u=-\alpha\log(1-y)$ and
   ${\rm d}u/{\rm d}y=\alpha/(1-y)$)
   leads to
   \begin{equation} \label{philocal}
      \Phi(x)\ =\ k + \frac{1}{\Gamma(1-\alpha)}\int_0^1
      (1-(1-y)^{\alpha x})\frac{\alpha}{y^{\alpha+1}}\,{\rm d}y.
   \end{equation}
   Partial integration with $u(y):=1-(1-y)^{\alpha x}$ and $v(y):=-y^{-\alpha}$
   turns the last integral into
   \begin{eqnarray*}
      &   & \hspace{-1cm}
            \int_0^1
            (1-(1-y)^{\alpha x})\frac{\alpha}{y^{\alpha+1}}\,{\rm d}y
      \ = \ \big[(1-(1-y)^{\alpha x})(-y^{-\alpha})\big]_0^1
           - \int_0^1 \alpha x(1-y)^{\alpha x-1} (-y^{-\alpha})\,{\rm d}y\\
      & = & -1 + \alpha x \int_0^1 y^{-\alpha}(1-y)^{\alpha x-1}\,{\rm d}y
      \ = \ -1 + \alpha x B(1-\alpha,\alpha x)
      \ = \ -1 + \frac{\Gamma(1-\alpha)\Gamma(1+\alpha x)}{\Gamma(1-\alpha+\alpha x)}.
   \end{eqnarray*}
   Plugging this into (\ref{philocal}) and noting that $k=1/\Gamma(1-\alpha)$
   yields $\Phi(x)=\Gamma(1+\alpha x)/\Gamma(1-\alpha+\alpha x)$, which is
   (\ref{phi}).\hfill$\Box$
\end{proof}
Let $E:=[0,\infty)$ and let $\widehat{C}(E)$ denote the set of
continuous functions $f:E\to\rz$ vanishing at infinity. The following
result is well known, we nevertheless mention it since it will turn out
to be useful to verify fundamental properties of the semigroup
$(T_t)_{t\ge 0}$ defined via (\ref{semigroup}).
\begin{lemma} \label{uniformcont}
   Every $f\in\widehat{C}(E)$ is uniformly continuous on $E$.
\end{lemma}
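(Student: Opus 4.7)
The plan is to reduce uniform continuity on the unbounded interval $E = [0,\infty)$ to the compact case by exploiting the vanishing-at-infinity hypothesis. Fix $f \in \widehat{C}(E)$ and let $\varepsilon > 0$. First I would use the definition of $\widehat{C}(E)$ to select a threshold $M = M(\varepsilon) > 0$ such that $|f(x)| < \varepsilon/2$ for all $x > M$. This confines the ``nontrivial'' behavior of $f$ to the compact interval $[0, M+1]$.

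Next, since $f$ is continuous on the compact set $[0, M+1]$, Heine's theorem provides a $\delta_0 > 0$ such that $|f(x) - f(y)| < \varepsilon$ whenever $x, y \in [0, M+1]$ and $|x-y| < \delta_0$. Setting $\delta := \min(\delta_0, 1)$ will cover the remaining cases by a pigeonhole-style split. For arbitrary $x, y \in E$ with $|x-y| < \delta$, assume without loss of generality $x \le y$. If $y \le M+1$ then both points lie in $[0, M+1]$ and the compact-set estimate applies directly. Otherwise $y > M+1$, and since $\delta \le 1$ we get $x > y - 1 > M$, so both $x$ and $y$ lie in the tail region where $|f| < \varepsilon/2$; the triangle inequality then yields $|f(x) - f(y)| \le |f(x)| + |f(y)| < \varepsilon$.

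Combining the two cases gives $|f(x) - f(y)| < \varepsilon$ uniformly in $x, y \in E$ with $|x-y| < \delta$, which is exactly the definition of uniform continuity on $E$. There is no serious obstacle here; the only subtlety is choosing $\delta$ small enough (namely $\delta \le 1$) to force the ``mixed'' case (where $x$ lies in the compact region and $y$ in the tail) to collapse into one of the two pure cases, so that the two separate estimates suffice without needing a matching condition across the threshold $M$.
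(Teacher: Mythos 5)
Your proof is correct and follows essentially the same route as the paper's: isolate the tail where $|f|<\varepsilon/2$, apply Heine's theorem on the compact interval $[0,M+1]$, and take $\delta\le 1$ so that any pair of nearby points falls entirely into one of the two regions. The case analysis and estimates match the paper's argument in all essentials.
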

\begin{proof}
   Let $\varepsilon>0$. Since $f$ vanishes at infinity, there
   exists $x_0\in [0,\infty)$ such that $|f(x)|<\varepsilon/2$ for
   all $x\in [x_0,\infty)$. By the theorem of Heine, $f$
   is uniformly continuous on $[0,x_0+1]$.
   Thus, there exists $\delta=\delta(\varepsilon)\in (0,1)$ such
   that $|f(x)-f(y)|<\varepsilon$ for all $x,y\in [0,x_0+1]$ with
   $|x-y|<\delta$. If $|x-y|<\delta$ but $x>x_0+1$ or $y>x_0+1$,
   then $x\ge x_0$ and $y\ge x_0$ and hence $|f(x)-f(y)|\le
   |f(x)|+|f(y)|<\varepsilon/2+\varepsilon/2=\varepsilon$.
   Thus, $|f(x)-f(y)|<\varepsilon$ for all $x,y\in E$ with
   $|x-y|<\delta$.\hfill$\Box$
\end{proof}
\begin{lemma} \label{applemma}
   For every $t\in [0,\infty)$ the operator $T_t$ defined via
   (\ref{semigroup}) satisfies $T_t\widehat{C}(E)\subseteq\widehat{C}(E)$. Moreover,
   for every $f\in\widehat{C}(E)$, $\lim_{t\to 0}T_tf(x)=f(x)$ uniformly for
   all $x\in E$, so $(T_t)_{t\ge 0}$ is a strongly continuous semigroup on
   $\widehat{C}(E)$.
\end{lemma}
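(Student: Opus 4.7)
The plan is to handle the two assertions separately. First I would verify $T_t\widehat{C}(E)\subseteq\widehat{C}(E)$, then establish the uniform convergence $T_tf\to f$ as $t\to 0$ for $f\in\widehat{C}(E)$. The full strong continuity statement then follows from the contractivity $\|T_t\|\le 1$ already noted in Section \ref{mittag} together with the semigroup law: for $h\ge 0$ one has $\|T_{t_0+h}f-T_{t_0}f\|=\|T_{t_0}(T_hf-f)\|\le\|T_hf-f\|$, and the case $t<t_0$ follows analogously by writing $T_{t_0}f=T_tT_{t_0-t}f$.

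The first assertion is routine via dominated convergence. The case $t=0$ is trivial. For $t>0$ and $f\in\widehat{C}(E)$, continuity of $T_tf$ at $x$ follows from the continuity of $y\mapsto y^{e^{-t}}$ and of $f$, using the integrable dominating function $\|f\|$. For vanishing at infinity, as $x\to\infty$ one has $x^{e^{-t}}\to\infty$ (since $e^{-t}>0$), and $\eta_t>0$ almost surely (the Mittag--Leffler distribution with parameter $e^{-t}\in(0,1)$ has no atom at zero), so $x^{e^{-t}}\eta_t\to\infty$ almost surely and dominated convergence gives $T_tf(x)\to 0$.

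The harder step is the uniform convergence $\|T_tf-f\|\to 0$ as $t\to 0$. The idea is that both $e^{-t}\to 1$ and $\eta_t\to 1$ in probability as $t\to 0$ (the latter because $\alpha\mapsto\pr_{\eta(\alpha)}$ is weakly continuous on $[0,1]$ and the limit $\delta_1$ is a point mass). The main obstacle is that $|x^{e^{-t}}\eta_t-x|$ does \emph{not} tend to $0$ uniformly in $x\in E$, so a direct uniform-continuity argument is impossible. I would therefore split $E$ into a compact piece $[0,L]$ and its complement, chosen to exploit the tail behaviour of $f$. Assuming $\|f\|>0$ (the other case is trivial), given $\varepsilon>0$ I would use Lemma \ref{uniformcont} to obtain a modulus $\delta>0$ for uniform continuity of $f$, pick $M\ge 1$ with $|f(y)|<\varepsilon$ for $y\ge M$, fix some $T>0$ (we need only consider $t\in(0,T]$), and set $L:=(2M)^{e^T}$, so that $L^{e^{-T}}=2M$.

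On $(L,\infty)$: for $t\in(0,T]$ and $x>L$, on the event $\{\eta_t\ge 1/2\}$ one has $x^{e^{-t}}\eta_t\ge\tfrac{1}{2}L^{e^{-t}}\ge\tfrac{1}{2}L^{e^{-T}}=M$, so $|f(x^{e^{-t}}\eta_t)|<\varepsilon$; the complementary event has probability $<\varepsilon/\|f\|$ for $t$ small enough, contributing at most $\varepsilon$. Combined with $|f(x)|<\varepsilon$ this gives $|T_tf(x)-f(x)|\le 3\varepsilon$. On $[0,L]$: since $(x,s)\mapsto x^{e^{-s}}$ is jointly continuous on the compact set $[0,L]\times[0,T]$, it is uniformly continuous there, so $\sup_{x\in[0,L]}|x^{e^{-t}}-x|\to 0$ as $t\to 0$. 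On the event $\{|\eta_t-1|<\gamma\}$ one estimates $|x^{e^{-t}}\eta_t-x|\le L\gamma+\sup_{y\in[0,L]}|y^{e^{-t}}-y|$, which can be made smaller than $\delta$ by picking $\gamma$ small first and then $t$ small; the complementary event contributes at most $\varepsilon$ using $\eta_t\to 1$ in probability. This yields a uniform bound $|T_tf(x)-f(x)|=O(\varepsilon)$ on all of $E$ for $t$ sufficiently small, completing the proof.
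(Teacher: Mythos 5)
Your proposal is correct and follows essentially the same route as the paper: dominated convergence for $T_t\widehat{C}(E)\subseteq\widehat{C}(E)$, and for strong continuity a split of $E$ into a compact piece and a tail, using uniform continuity of $f$ (Lemma \ref{uniformcont}) together with $\eta_t\to 1$ in probability on the compact piece and the decay of $f$ at infinity plus the event $\{\eta_t\ge 1/2\}$ on the tail. The only differences are cosmetic (your threshold $L=(2M)^{e^T}$ versus the paper's $K=4x_0^2$ with the reparametrization $\alpha=e^{-t}\in[1/2,1]$, and your extra remark reducing continuity at general $t_0$ to continuity at $0$ via contractivity).
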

\begin{proof}
   For $t\in [0,\infty)$, $f\in\widehat{C}(E)$ and $x\in E$ we have
   $$
   (T_tf)(x)\ =\ \int_E f(x^{e^{-t}}y)\pr_{\eta_t}({\rm d}y)
   \ \to\ 0,\qquad x\to\infty,
   $$
   by dominated convergence and, similarly,
   $$
   (T_tf)(x)-(T_tf)(x_0)
   \ =\ \int_E (f(x^{e^{-t}}y)-f(x_0^{e^{-t}}y))\,\pr_{\eta_t}({\rm d}y)
   \ \to\ 0,\qquad x\to x_0,
   $$
   again by dominated convergence. Thus $T_t\widehat{C}(E)\subseteq\widehat{C}(E)$
   for all $t\in [0,\infty)$.

   In order to prove the second statement fix $f\in\widehat{C}(E)$.
   Note that $f$ is bounded, i.e. $\|f\|:=\sup_{x\in E}|f(x)|<\infty$.
   For $\alpha\in [0,1]$ let $Z_\alpha$ denote
   a random variable being Mittag--Leffler distributed with parameter
   $\alpha$. Since $T_tf(x)=\me(f(x^{e^{-t}}\eta_t))$, where $\eta_t$ is
   Mittag--Leffler distributed with parameter $\alpha:=e^{-t}$, we
   have to verify that $\lim_{\alpha\to 1}\me(f(x^\alpha Z_\alpha))=f(x)$
   uniformly for all $x\in E$, where without loss of generality we can
   assume that $\alpha\in [1/2,1]$.

   Fix $\varepsilon>0$. Since $f$ vanishes at infinity, there exists a
   constant $x_0\in [1,\infty)$ such that $|f(x)|<\varepsilon$ for all $x\ge x_0$.
   Define $K:=4x_0^2$ ($\ge x_0\ge 1$). In the following the uniform
   convergence $\lim_{\alpha\to 1}\me(f(x^\alpha Z_\alpha))=f(x)$ is
   verified by distinguishing the two situations $x\in [K,\infty)$ and
   $x\in [0,K]$. For $x\in [K,\infty)$ we essentially exploit the fact
   that $f$ vanishes at infinity. For $x\in [0,K]$ the uniform continuity
   of $f$ (Lemma \ref{uniformcont}) comes into play. Let us start with the
   case $x\in [K,\infty)$.

   For all $x\ge K$ and all $z\ge 1/2$ we have
   $x^\alpha z\ge x^\alpha/2\ge \sqrt{x}/2\ge\sqrt{K}/2= x_0$ and, hence,
   $|f(x^\alpha z)|<\varepsilon$. We therefore obtain uniformly for all
   $x\ge K$
   \begin{eqnarray*}
      \me(f(x^\alpha Z_\alpha))-f(x)
      & \le & \int_E |f(x^\alpha z)-f(x)|\,\pr_{Z_\alpha}({\rm d}z)\\
      & \le & \int_{[1/2,\infty)} \underbrace{|f(x^\alpha z)-f(x)|}_{\le 2\varepsilon}\,\pr_{Z_\alpha}({\rm d}z)
        + \int_{[0,1/2)} \underbrace{|f(x^\alpha z)-f(x)|}_{\le 2\|f\|}\,\pr_{Z_\alpha}({\rm d}z)\\
      & \le & 2\varepsilon + 2\,\|f\|\,\pr(Z_\alpha<1/2)
      \ \to \ 2\varepsilon
   \end{eqnarray*}
   as $\alpha\to 1$,
   since $Z_\alpha\to Z_1\equiv 1$ in distribution as $\alpha\to 1$.

   Assume now that $x\in [0,K]$. By Lemma \ref{uniformcont} the function
   $f$ is uniformly continuous on $E$. Thus, there exists a constant
   $\delta=\delta(\varepsilon)>0$ such that
   $|f(y)-f(x)|<\varepsilon$ for all $x,y\in E$ with $|y-x|<\delta$.
   Since $x^\alpha$ converges to $x$ as $\alpha\to 1$ uniformly on $[0,K]$
   we can
   choose $\alpha_0=\alpha_0(\delta)=\alpha_0(\varepsilon)<1$ sufficiently
   close to $1$ such that $|x^\alpha-x|<
   \delta/2$ for all $\alpha\in[\alpha_0,1]$ and all $x\in [0,K]$. For all
   $\alpha\in[\alpha_0,1]$, $x\in [0,K]$ and all
   $z\in E$ with $|z-1|<\gamma:=\delta/(2K)$ we have
   \begin{eqnarray*}
      |x^\alpha z-x|
      & \le & |x^\alpha z-x^\alpha|+|x^\alpha-x|
      \ = \ x^\alpha |z-1| + |x^\alpha-x| \\
      & < & K^\alpha|z-1| + \frac{\delta}{2}
   \ \le\ K|z-1| + \frac{\delta}{2}
   \ <\ K\gamma + \frac{\delta}{2}
   \ =\ \delta,
   \end{eqnarray*}
   and, hence, $|f(x^\alpha z)-f(x)|<\varepsilon$. For all
   $\alpha\in[\alpha_0,1]$ and all $x\in [0,K]$ it follows that
   \begin{eqnarray*}
      &   & \hspace{-2cm}|\me(f(x^\alpha Z_\alpha))-f(x)|
      \ \le\ \int_E |f(x^\alpha z) - f(x)|\,\pr_{Z_\alpha}({\rm d}z)\\
      & = & \int_{\{|z-1|<\gamma\}} \underbrace{|f(x^\alpha z)-f(x)|}_{\le\varepsilon}\,\pr_{Z_\alpha}({\rm d}z)
           + \int_{\{|z-1|\ge \gamma\}} \underbrace{|f(x^\alpha z)-f(x)|}_{\le 2\|f\|}\,\pr_{Z_\alpha}({\rm d}z)\\
      & \le & \varepsilon + 2\,\|f\|\,\pr(|Z_\alpha-1|\ge\gamma)
      \ \to\ \varepsilon
   \end{eqnarray*}
   as $\alpha\to 1$, since $Z_\alpha\to Z_1\equiv 1$ in probability as
   $\alpha\to 1$. In summary it is shown
   that $\lim_{\alpha\to 1}\me(f(x^\alpha Z_\alpha))=f(x)$ uniformly for
   all $x\in E$. Thus,
   $\lim_{t\to 0}T_tf(x)=f(x)$ uniformly for all $x\in E$.\hfill$\Box$
\end{proof}
%
%

\end{document}